\newtheorem{thm}{Theorem}[section]
\newtheorem{prop}[thm]{Proposition}
\newtheorem{LM}[thm]{Lemma}
\newtheorem{cor}[thm]{Corollary}
 \theoremstyle{definition}
  \newtheorem{definition}{Definition}[section] 
    \newtheorem{conj}{Conjecture}[section] 
 \newtheorem{question}[thm]{Question}
    \newtheorem{rem}[thm]{Remark}
  \DeclareMathOperator {\Hom}{Hom}
   \DeclareMathOperator {\FF}{\mathfrak{F}} 
 \DeclareMathOperator {\PP}{\mathfrak{P}}
       \DeclareMathOperator {\ima}{im}
 \DeclareMathOperator {\dime}{dim} 
 \DeclareMathOperator {\asdim}{asdim}
  \DeclareMathOperator {\F}{\mathfrak{F}}
  \DeclareMathOperator {\Gcd}{Gcd}
 \DeclareMathOperator {\cd}{cd} 
 \DeclareMathOperator {\gd}{gd} 
  \DeclareMathOperator {\hd}{hd} 
  \DeclareMathOperator {\Rstt}{Rst}
\DeclareMathOperator {\h}{h} 
   \DeclareMathOperator {\vcd}{vcd} 
  \DeclareMathOperator {\pd}{pd} 
   \DeclareMathOperator {\Z}{\mathbb{Z}} 
    \DeclareMathOperator {\spli}{spli}
 \DeclareMathOperator {\D}{\Delta}
    \DeclareMathOperator {\CW}{CW}
 \DeclareMathOperator {\tos}{\twoheadrightarrow}
\numberwithin{equation}{section}
\begin{document}

\title[Cohomological invariants for groups]{Cohomological invariants and \\ the classifying space for proper actions}

\author{Giovanni Gandini}
\email{g.gandini@soton.ac.uk}

\address{School of Mathematics, University of Southampton, Southampton, SO17 1BJ UNITED KINGDOM}

\subjclass[2010]{Primary 20F65, 	18G60, 20J05  }

\keywords{Classifying spaces, cohomological finiteness conditions, branch groups}

\begin{abstract}
We investigate two open questions in a cohomology theory relative to the family of finite subgroups.  The problem of  whether the  $\FF$-cohomological dimension   is subadditive is reduced  to extensions by groups of prime order. 
We show that every finitely generated regular branch group has infinite rational cohomological dimension. Moreover, we prove that the first Grigorchuk group $\mathfrak{G}$  is  not contained in Kropholler's class ${\scriptstyle \mathbf H}\mathfrak F$.
\end{abstract}
\maketitle
\section{Introduction}
Let $G$ be a group and let $\FF$ be the class of finite groups. A $G$-$\CW$-complex is $proper$ if  all its cell stabilisers are in $\FF$. If a proper $G$-$\CW$-complex  $X$ has the property that for each $\FF$-subgroup $K$ of $G$  the fixed point subcomplex $X^{K}$ is contractible, then $X$ is called a \emph{classifying space for proper actions of $G$} (or a model for $E_{\FF}G$).  The equivariant $K$-homology of the classifying space for proper actions forms the left-hand side of the celebrated  Baum-Connes conjecture.  

 Every group $G$ admits a model for $E_{\FF} G$ by generalisations of the constructions of Milnor \cite{milnor} and Segal  \cite{segal}. The Bredon geometric dimension of $G$, denoted by $\gd_{\FF}G$, is the minimal dimension of a model for $E_{\FF}G$. The Bredon cohomological dimension  $\cd_{\FF}G$ plays a role analogous to that of  the integral cohomological dimension $\cd G$ in ordinary group cohomology and  is an algebraic counterpart of $\gd_{\FF} G$. In particular, $\cd_{\FF} G $ is finite if and only if $\gd_{\FF} G$ is finite \cite{luck-89}. However, both invariants are often very difficult to compute. Several possible ``easy'' geometric and algebraic invariants that guarantee their finiteness have been proposed by various authors \cite{guidosbook, abdeta, nucinkis-00}.

 On the geometric side Kropholler introduces the class of ${\scriptstyle \mathbf H}_{1}\mathfrak{F}$-groups  \cite{MR1246274}.  A group belongs to  ${\scriptstyle \mathbf H}_{1}\mathfrak{F}$   if there is a finite dimensional contractible $G$-$\CW$-complex $X$ with cell stabilisers in $\mathfrak{F}$.  
 The following has been open for almost 20 years.
  \begin{conj}[Kropholler, \cite{guidosbook}] Every  ${\scriptstyle \mathbf H}_{1}\mathfrak{F}$-group  $G$ admits a finite dimensional model for $E_{\FF}G$. 
 \end{conj}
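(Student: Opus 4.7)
The approach is to work algebraically via the Bredon cohomological dimension $\cd_{\FF}G$. By L\"uck's theorem, $\gd_{\FF}G<\infty$ if and only if $\cd_{\FF}G<\infty$, so the task reduces to bounding the length of a projective resolution of the constant contravariant functor $\underline{\Z}$ over the orbit category associated to $\FF$. The hypothesis supplies a finite-dimensional contractible proper $G$-$\CW$-complex $X$; its cellular chain complex is a resolution of $\Z$ of length $\dim X$ whose chain modules are direct sums of permutation modules $\Z[G/K]$ with $K\in\FF$. This already places $G$ in $\LHF$ and yields a candidate resolution whose only defect is that the fixed-point sets $X^{K}$ may fail to be contractible for finite subgroups $K$ that do not appear as cell stabilisers of $X$.

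The second step would be an equivariant fixed-point repair. Working by induction on the poset of conjugacy classes of finite subgroups of $G$ ordered by reverse inclusion, one would attach equivariant cells at each stage to replace $X^{K}$ by a contractible complex, using Bredon obstruction theory to ensure compatibility with the previously repaired fixed-point sets $X^{L}$ for $K<L$. Provided this poset has finite height and each repair contributes cells of uniformly bounded dimension, the resulting $G$-$\CW$-complex is a finite-dimensional model for $E_{\FF}G$.

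The principal obstacle, and the reason the conjecture has resisted for nearly two decades, is that neither of these finiteness conditions is forced by the assumption $G\in{\scriptstyle \mathbf H}_{1}\FF$ alone. A priori the finite subgroups of such a $G$ need satisfy no uniform order bound, no finite number of conjugacy classes, and no finite height in the subgroup poset; iterating the naive fixed-point repair therefore inflates the dimension without termination. Progress seems to require either a structural restriction on the finite subgroup lattice of ${\scriptstyle \mathbf H}_{1}\FF$-groups, or a fundamentally different global construction that bypasses the orbit poset altogether. The material developed in the remainder of the paper, in particular the identification of branch groups of infinite rational cohomological dimension and of the first Grigorchuk group as a group outside $\HF$, suggests that ${\scriptstyle \mathbf H}_{1}\FF$ is a wider class than classical examples indicate and that any genuine proof will need to engage seriously with such pathologies rather than with the naive strategy sketched above.
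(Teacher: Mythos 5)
This statement is Kropholler's conjecture: the paper records it precisely because it is \emph{open} (for almost twenty years), and offers no proof of it. Your proposal is not a proof either, and to your credit you say so in your own third paragraph; but it is worth pinning down exactly where each of your two steps already runs into known open problems rather than mere technical inconvenience. First, the reduction to $\cd_{\FF}G<\infty$ via L\"uck's algebraic--geometric equivalence is fine, and the Bouc/Kropholler--Wall splitting does give, from the chain complex of $X$, a finite-length $\FF$-split resolution of $\Z$ by permutation modules with finite stabilisers, hence $\FF\cd G<\infty$. But the passage from finite $\FF$-cohomological dimension (or from a finite-length permutation resolution) to finite \emph{Bredon} cohomological dimension is itself an open conjecture of Nucinkis, restated in this paper as Question \ref{ques1}; so your first step does not ``reduce'' the problem, it reformulates it as a different unsolved one.

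Second, the ``equivariant fixed-point repair'' by induction over the poset of conjugacy classes of finite subgroups ordered by reverse inclusion is essentially L\"uck's construction, and it genuinely works --- but only under the hypothesis that the lengths $l(H)$ of the finite subgroups of $G$ are uniformly bounded, which is exactly the finite-height condition you flag as unverified. That hypothesis is \emph{not} a consequence of $G\in{\scriptstyle \mathbf H}_{1}\mathfrak{F}$: the paper's own examples (the Gupta--Sidki group, the first Grigorchuk group, and more generally finitely generated branch groups with infinite locally finite subgroups) have no bound on the lengths of their finite subgroups, so the induction you describe has no reason to terminate, and the paper explicitly notes that any counterexample to these finiteness conjectures must live outside the bounded-length regime. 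So the concrete gap is twofold: step one lands on an open problem, and step two assumes a finiteness hypothesis that fails for precisely the groups that make the conjecture hard. Nothing in your sketch supplies the missing global construction, and the statement should remain labelled as a conjecture.
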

A result proved independently by Bouc \cite{bouc} and Kropholler-Wall \cite{kropwall} implies that the augmented cellular chain complex $C_{*}(X)$ of any finite-dimensional contractible proper $G$-$\CW$-complex splits when restricted to the $\FF$-subgroups of $G$.  
Nucinkis introduces a cohomology theory  relative to a $G$-set $\D$  in order to algebraically mimic the behaviour of  ${\scriptstyle \mathbf H}_{1}\mathfrak{F}$-groups \cite{nucinkis-99}. This theory can be regarded as a cohomology relative to a class of proper short exact sequences as in  IX $\&$ XII  \cite{mac-lane-95} or as cohomology relative to the $\Z G$-module $\Z\D$. It generalises  cohomology relative to a subgroup to cohomology relative to   a family of subgroups. When dealing with the family of $\FF$-subgroups, we will refer to  this as $\FF$-cohomology. In this setup there is a well-defined $\FF$-cohomological dimension $\FF\cd G$  \cite{nucinkis-00}.
It is an open question whether every group of finite $\FF$-cohomological dimension lies in  ${\scriptstyle \mathbf H}_{1}\mathfrak{F}$. The converse holds by the result of Kropholler and Wall mentioned above.  Of course it is also unknown whether every group of finite $\FF$-cohomological dimension admits a finite dimensional model for  $E_{\FF}G$, and this  is conjectured in \cite{nucinkis-00}.  
It is well-known that for any group $\Gamma$,  $\cd_{\mathbb{Q}}\Gamma\leq\FF\cd \Gamma\leq\cd_{\FF}\Gamma\leq\gd_{\FF}\Gamma$, see for example \cite{bln}.

Remaining on the algebraic side it is important to mention that Bahlekeh, Dembegioti and Talelli conjecture in \cite{abdeta} that every group of finite  Gorenstein cohomological dimension $\Gcd G$ has finite Bredon geometric dimension. 
Most of these conjectures have positive answers in two important cases. The length $l(H)$ of an $\FF$-subgroup $H$ of $G$ is the  supremum over all $n$ for which
there is a chain $H_{0} < H_{1}<\ldots <H_{n}=H$. 
Firstly, by applications of a result of L\"uck  \cite{lueck-00} if $G$ has a bound on the lengths of its $\FF$-subgroups  then the finiteness of $\cd_{\FF}G$ is equivalent to the finiteness of $\Gcd G$ and  $\FF\cd G$.
 Secondly, if $G$  is a countable elementary amenable group then $\cd_{\mathbb{Q}}G<\infty$,  $ \FF\cd G<\infty$ and $\cd_{\FF}G<\infty$ are equivalent by a theorem of  Flores and Nucinkis \cite{flores-05}.\\ Let  $\dim$ be a function from the class of all groups to $ \mathbb{N} \cup \{\infty\}$. Then we say that $\dime$ is \emph{subadditive} if for every group extension $N\hookrightarrow  G\tos Q$ we have $\dime G \leq \dime N + \dime Q$.

The good behaviour of the $\FF$-cohomological dimension with respect to several group operations is known \cite{nucinkis-00} but its behaviour with respect to  taking  group extensions remains unclear. We investigate the subadditivity of the $\FF$-cohomological dimension and we prove that the $\FF$-cohomological dimension is subadditive if and only if the $\FF$-cohomological dimension is preserved under taking extensions by groups of prime order. Leary and Nucinkis \cite{leary-03} build a group extension such that $\cd_{\FF}N=\FF\cd N=2n$ and  $\cd_{\FF}Q=\FF\cd Q =0$ but $\cd_{\FF}G=3n$ and $\FF\cd G=2n$. Under extra conditions the behaviour of $\gd_{\FF} G$ under taking group extensions is known \cite{MR1851258, martinez-perez-02, MR2364823}. It is still unknown whether there exists  a group of infinite Bredon geometric dimension that is an extension of two groups of finite Bredon geometric dimension.

On the other hand it is known that the  Gorenstein cohomological dimension is subadditive \cite{abdeta}. The precise connections between the Gorenstein cohomological dimension  and the $\FF$-cohomological and rational cohomological dimensions are unclear. We show that $\Gcd G \leq \FF\cd G$, but it is unknown whether the finiteness of $\Gcd G$ implies the finiteness of $\FF\cd G$.  If there exists a group $G$ that has $\Gcd G <\infty$ or $\FF\cd G < \infty$ but admits no finite dimensional model for $E_{\FF}G$, then by the theorem of L\"uck $G$ can not have a bound on the lengths  of its $\FF$-subgroups. 

Branch groups are certain subgroups of the full automorphism groups of spherically homogeneous rooted trees.  Several examples of finitely generated periodic non-elementary amenable groups with no bound on the lengths of their $\FF$-subgroups lie in this class. Here we show that finitely generated regular branch groups have infinite rational cohomological dimension, which implies that the $\FF$-cohomological dimension and  the Bredon cohomological dimension are infinite as well. 

Let ${\scriptstyle \mathbf H}\mathfrak F$ be Kropholler's class  of \emph{hierarchically decomposable groups} \cite{MR1246274}. ${\scriptstyle \mathbf H}\mathfrak F$ is defined as the smallest class of groups containing the class $\FF$ and which contains a group $G$ whenever there is an admissible action of $G$ on a finite-dimensional contractible cell complex for which all isotropy groups already belong to ${\scriptstyle \mathbf H}\mathfrak F$. An important question in the area is to determine which branch groups belong to the class ${\scriptstyle \mathbf H}\mathfrak F$. 
Until the recent work \cite{team}, where groups with a strong global fixed point property are constructed,   the only way to show that  a group $G$ did not belong to ${\scriptstyle \mathbf H}\mathfrak F$ was to find a subgroup of $G$ isomorphic to the Thompson group $\bold{F}$.
Here we show that certain branch groups, such as  the first Grigorchuk group $\mathfrak{G}$ is not contained in the class ${\scriptstyle \mathbf H}\mathfrak F$. Furthermore, $\mathfrak{G}$  is a counterexample to a conjecture of Petrosyan \cite{petro} and answers in negative a question of Jo-Nucinkis \cite{nujo}.

\subsection*{Acknowledgements} The author would like to thank his supervisor Brita  E.A. Nucinkis and Laurent Bartholdi for their encouragement and advice.  
\section{Background}
Let  $\D$ be a $G$-set that satisfies  the following condition:
\begin{equation*}
\D^{H} \neq \emptyset \Longleftrightarrow   H\leq G,\,\,H \in \mathfrak{F}.\tag{$*$}
\end{equation*}
By Lemma 6.1 in \cite{nucinkis-99} any $G$-set satisfying the above generates the same cohomology theory. The standard example of such a $G$-set is the set of all cosets $Hg$ where $H$ is an $\FF$-subgroup and $g \in  G$. Note that  for our purpose  it is enough to consider one $H$ per conjugacy class, and if $G$ has maximal $\FF$-subgroups it is enough to consider these.

In $\FF$-cohomology exact sequences of $\Z G$-modules are replaced by $\FF$-split sequences, i.e. exact sequences of $\Z G$-modules that split under tensoring with $\Z\D$ over $\Z$. It turns out that a sequence of $\Z G$-modules has this property if and only if splits when restricted to each $\FF$-subgroup of $G$. A $\Z G$-module is \emph{$\FF$-projective} if  it is a direct summand of a module of the form $N \otimes \Z\D$ where $N$ is a $\Z G$-module. From the $\FF$-split surjection $\Z\D \tos \Z$ it follows that the category of $\Z G$-modules has enough $\FF$-projectives.  It is easy to show  that a relative version of the generalised Schanuel's lemma holds using Lemma 2.7 in \cite{nucinkis-99}. In particular, the $\FF$-projective dimension of any $\Z G$-module is well-defined. The $\FF$-cohomological dimension $\FF\cd G$ is defined as the $\FF$-projective dimension of the trivial $\Z G$-module $\Z$. 

For any $G$ there is a \emph{standard  $\FF$-projective resolution} of $\Z$  $$\bold{P}: \dots  \to P_{2}\stackrel{d_{2}}\to P_{1}\stackrel{d_{2}}\to P_{0}\stackrel{d_{0}} \tos\Z .$$
For $0 \leq i \leq n$ define $P_{i} = \mathbb{Z}(\D^{i})$ and the maps $d_{i} : P_{i+1} \to P_{i}$ are given by
$$d_{i}(\delta_{0}, \delta_{1}, \dots, \delta_{i})= \sum_{k=0}^{i}(-1)^{k}(\delta_{0}, \delta_{1}, \dots, \hat{\delta}_{k}, \dots , \delta_{i})$$ where $\hat{\delta}_{k}$ means  that $\delta_{k}$ is omitted  \cite{nucinkis-00}. Note that  the standard $\FF$-projective resolution $\bold{P}\tos \Z$ is the augmented cellular chain complex for a model of $E_{\FF} G$.  Of course $G$ has $\FF\cd G=0$ if and only if it is finite, and  $G$ has $\FF\cd G=1$ if and only if $G$ acts on a tree with finite stabilisers by Dunwoody's theorem \cite{Dcoh}.
For more details, the reader should consult \cite{nucinkis-99} and  \cite{nucinkis-00}.\

A $\Z G$-module $M$  admits a \emph{complete resolution} if there is an acyclic $\Z G$-projective complex $\bold{F} = \{(F_{i}, i \in \Z\}$, and a projective resolution $\bold{P} = \{P_{i}, i \in \mathbb{N}_{0}\}$ of $M$ such that F and P coincide in sufficiently high dimensions. The smallest dimension in which they coincide is called the coincidence index. A complete resolution of $M$ such that  $\Hom_{\Z G}(\bold{F}, Q)$ is acyclic  for every $\Z G$-projective module $Q$ is called  a complete resolution in the strong sense. $M$ is  \emph{Gorenstein projective} if it admits a complete resolution in the strong sense of coincidence index $0$. The category of $\Z G$-modules has enough  Gorenstein projectives and there is a well-defined notion of Gorenstein projective dimension. Now, as usual for a group $G$ the Gorenstein cohomological dimension $\Gcd G$  is defined as the Gorenstein projective dimension of the trivial $\Z G$-module $\Z$. A detailed study of the Gorenstein cohomological dimension can be found in \cite{cohodime, abdeta}. 
 \section{Group extensions and the $\mathfrak{F}$-cohomological dimension}
The class of groups of finite $\FF$-cohomological dimension is closed  under taking subgroups, HNN-extensions and  free products with amalgamation \cite{nucinkis-00}. Moreover in  \cite{nucinkis-00} it is shown that this class is closed under taking extension by groups of finite integral cohomological dimension.

We begin by recalling some results needed in the proof of the main proposition. 
\begin{LM}\cite[2.2]{nucinkis-00}\label{2.2}
Le $N\hookrightarrow   G \stackrel{\pi} \tos Q$ be a group extension and let $\mathfrak{H}$ be a family of
groups satisfying the following condition: if $H$ is a subgroup of $G$ and $H \in\mathfrak{H}$, then $\pi(H) \in \mathfrak{H}$. Then every $\mathfrak{H}$-split short exact sequence of $\Z Q$-modules is
$\mathfrak{H}$-split when regarded as a sequence of $\Z G$-modules.
\end{LM}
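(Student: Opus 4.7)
The plan is to unpack the definition of $\mathfrak{H}$-split at the level of an arbitrary $\mathfrak{H}$-subgroup of $G$ and reduce it directly to $\mathfrak{H}$-splitness over $Q$ via the hypothesis on $\pi$. So let $A\hookrightarrow B\twoheadrightarrow C$ be a short exact sequence of $\Z Q$-modules that is $\mathfrak{H}$-split, meaning that for every $\mathfrak{H}$-subgroup $K\leq Q$ the restriction of the sequence to $\Z K$ admits a splitting. View the sequence as a sequence of $\Z G$-modules via the surjection $\pi:G\twoheadrightarrow Q$.

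Pick any subgroup $H\leq G$ with $H\in\mathfrak{H}$. By the assumption on the family $\mathfrak{H}$, the image $\pi(H)$ is again in $\mathfrak{H}$, so there is a $\Z\pi(H)$-module splitting $s:C\to B$ of $B\twoheadrightarrow C$. The key observation, and the only real content of the argument, is that the $\Z G$-module structure on any of $A$, $B$, $C$ is obtained by inflation along $\pi$, so the $H$-action factors as $H\xrightarrow{\pi|_{H}}\pi(H)\curvearrowright B$. Consequently any $\Z\pi(H)$-linear map is automatically $\Z H$-linear, and in particular $s$ is a $\Z H$-splitting. Since $H$ was an arbitrary $\mathfrak{H}$-subgroup of $G$, this shows that the sequence is $\mathfrak{H}$-split as a sequence of $\Z G$-modules.

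There is no real obstacle here: the proof is essentially a definition-chase, and the substantive hypothesis on $\mathfrak{H}$ is used in exactly one place, namely to guarantee that $\pi(H)\in\mathfrak{H}$ so that a splitting over $\Z\pi(H)$ is available in the first place. The only point one needs to be careful about is to insist that the splitting exists as a $\Z\pi(H)$-homomorphism (not merely as an abelian group map), which is precisely what $\mathfrak{H}$-splitness of the original $\Z Q$-sequence provides.
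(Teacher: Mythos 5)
Your argument is correct: the paper itself does not reprove this lemma (it is quoted from Nucinkis's paper), and your definition-chase --- using the characterisation of $\mathfrak{H}$-splitness via restriction to $\mathfrak{H}$-subgroups, the hypothesis to get $\pi(H)\in\mathfrak{H}$, and the fact that the $G$-action is inflated along $\pi$ so that a $\Z\pi(H)$-section is automatically a $\Z H$-section --- is exactly the standard proof of the cited result. No gaps.
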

For a $\Z H$-module $M$,  we use the standard notation $M\uparrow_{H}^{G}:=\Z G \otimes_{\Z H} M$.
\begin{LM}\cite[8.2]{nucinkis-99}\label{Lemma8.2nuc99} Let $H$ be a subgroup of $G$ and let $A \hookrightarrow  B \tos C$ be an $\FF$-split short exact sequence  of kH-modules. Then the sequence $A\uparrow_{H}^{G} \hookrightarrow  B\uparrow_{H}^{G} \tos C\uparrow_{H}^{G}$ is an $\FF$-split sequence of kG-modules.
\end{LM}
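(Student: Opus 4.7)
The plan is to reduce to checking the splitting property on restriction to each finite subgroup of $G$, and then use a Mackey-type double coset decomposition to reduce back to the hypothesis on $kH$-module splittings.

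First I would recall that, by the characterization mentioned in the Background section, a short exact sequence of $kG$-modules is $\FF$-split if and only if its restriction to every finite subgroup $K\leq G$ splits as a sequence of $kK$-modules. So I fix an arbitrary finite subgroup $K\leq G$, and the goal becomes to produce a $kK$-linear splitting of the sequence
\[
A\uparrow_{H}^{G}\hookrightarrow B\uparrow_{H}^{G}\tos C\uparrow_{H}^{G}
\]
after restricting along $K\hookrightarrow G$.

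Next I would apply the Mackey decomposition to each induced module. Choosing a set of representatives $g$ for the double cosets $K\backslash G/H$, one has a natural $kK$-isomorphism
\[
\res_{K}^{G}(M\uparrow_{H}^{G})\;\cong\;\bigoplus_{g\in K\backslash G/H}\bigl({}^{g}M\bigr)_{K\cap gHg^{-1}}\uparrow_{K\cap gHg^{-1}}^{K},
\]
functorial in the $kH$-module $M$, where ${}^{g}M$ denotes $M$ regarded as a $k(gHg^{-1})$-module via conjugation. Applying this functorially to the entire short exact sequence $A\hookrightarrow B\tos C$ splits the problem into one split-exactness question per double coset.

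Fix such a $g$. Conjugation by $g^{-1}$ identifies $K\cap gHg^{-1}$ with the subgroup $g^{-1}Kg\cap H$ of $H$, which is finite because $K$ is. By hypothesis $A\hookrightarrow B\tos C$ is $\FF$-split as $kH$-modules, so it admits a $k(g^{-1}Kg\cap H)$-linear splitting; transporting through conjugation gives a $k(K\cap gHg^{-1})$-linear splitting of ${}^{g}A\hookrightarrow{}^{g}B\tos{}^{g}C$. Induction from $K\cap gHg^{-1}$ to $K$ is an additive functor, so it carries this splitting to a $kK$-linear splitting of the $g$-summand. Finally, assembling these splittings summand by summand yields the required $kK$-linear splitting of the restricted sequence.

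The only point requiring any care is that $K\backslash G/H$ may be infinite when $G$ is infinite, but this causes no difficulty since direct sums of split short exact sequences are split (the component splittings assemble into a single map). So I do not foresee a genuine obstacle; the whole argument is driven by the naturality of Mackey decomposition together with the equivalence between $\FF$-splitting and splitting after restriction to finite subgroups.
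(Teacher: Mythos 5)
Your argument is correct, but note that the paper does not actually prove this lemma --- it is quoted verbatim from Lemma 8.2 of the cited work of Nucinkis, and the proof there (like the proof of Lemma \ref{proind} in this paper, which is its exact analogue for modules rather than sequences) runs through the tensor-identity route: one takes a $G$-set $\D$ satisfying $(*)$, observes that $\D$ restricted to $H$ still satisfies $(*)$, uses the definition of $\FF$-split as ``splits after applying $-\otimes k\D$'', and then concludes from exactness of induction together with the Frobenius isomorphism $(M\otimes k\D)\uparrow_{H}^{G}\cong M\uparrow_{H}^{G}\otimes k\D$. Your route is genuinely different: you instead invoke the equivalence (stated in the Background section) between $\FF$-splitting and splitting on restriction to each finite subgroup, and then run a Mackey double-coset decomposition of $\res_{K}^{G}(-\uparrow_{H}^{G})$ for each finite $K\leq G$, splitting summand by summand via the finite subgroups $g^{-1}Kg\cap H$ of $H$. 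This is valid: the Mackey decomposition $kG\cong\bigoplus_{KgH}k[KgH]$ as a $(kK,kH)$-bimodule holds with no finiteness or index restrictions, it is natural in $M$, induction is additive so preserves splittings, and an arbitrary direct sum of split short exact sequences is split. What the tensor-identity proof buys is brevity and independence from the restriction-to-finite-subgroups characterization (it works directly from the $k\D$-splitting definition); what your proof buys is a concrete, subgroup-by-subgroup picture of where the splittings of the induced sequence come from, at the cost of relying on that characterization and on transporting splittings through conjugation. Both are complete proofs.
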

Any $\Z G$-module $M$ induced up from an $\FF$-subgroup $H$ of $G$ is $\FF$-projective (Corollary 2.4, \cite{nucinkis-99}). This is not true for arbitrary subgroups $H$, but holds if $M$ is induced up from an $\FF$-projective $\Z H$-module. 
\begin{LM} \label{proind}
Let $H$ be a subgroup of $G$ and  $P$ be an $\FF$-projective $\mathbb{Z}H$-module. Then $P\uparrow_{H}^{G}$ is an $\FF$-projective $\mathbb{Z}G$-module.
\end{LM}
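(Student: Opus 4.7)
The plan is to unfold the definition of $\FF$-projectivity for $\Z H$-modules and then push the witnessing decomposition up to $G$ via induction, using a Frobenius-style projection formula to absorb the induction functor into the $\Z G$-module factor of the tensor product.

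First I would observe that the restricted $G$-set $\D|_{H}$ still satisfies condition $(*)$ for $H$ (finiteness is absolute, and $\D^{K}\neq \emptyset$ for every finite $K\leq H$), so by Lemma 6.1 of \cite{nucinkis-99} it computes $\FF$-cohomology over $\Z H$. Hence $P$ being $\FF$-projective over $\Z H$ means there is a $\Z H$-module $N$ and a $\Z H$-module $Q$ such that
\[
N\otimes_{\Z}\Z\D \;\cong\; P\oplus Q
\]
as $\Z H$-modules. Since $\Z G\otimes_{\Z H}-$ is additive, applying it to this splitting exhibits $P\uparrow_{H}^{G}$ as a direct summand of $(N\otimes_{\Z}\Z\D)\uparrow_{H}^{G}$.

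The key step is then to establish a natural $\Z G$-module isomorphism
\[
\Z G\otimes_{\Z H}\bigl(N\otimes_{\Z}\Z\D\bigr) \;\cong\; (N\uparrow_{H}^{G})\otimes_{\Z}\Z\D,
\]
where on the right the $G$-action on the tensor product is diagonal. The explicit map $g\otimes(n\otimes\delta)\mapsto (g\otimes n)\otimes g\delta$ is easily checked to be $\Z H$-balanced and $G$-equivariant, with inverse $(g\otimes n)\otimes\delta\mapsto g\otimes(n\otimes g^{-1}\delta)$. This is a standard projection formula, but I would verify it carefully since the diagonal action is where the argument can go wrong. Once this isomorphism is in hand, setting $M:=N\uparrow_{H}^{G}$ (a $\Z G$-module) shows that $P\uparrow_{H}^{G}$ is a direct summand of $M\otimes_{\Z}\Z\D$, which by definition makes $P\uparrow_{H}^{G}$ an $\FF$-projective $\Z G$-module.

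The only subtle point is the projection formula isomorphism; the rest is formal. If one wanted a proof with no computation in coordinates, an alternative would be to check that both functors $\Z G\otimes_{\Z H}(-\otimes_{\Z}\Z\D)$ and $(-\uparrow_{H}^{G})\otimes_{\Z}\Z\D$ are right exact and agree on free $\Z H$-modules, but the explicit map is more direct.
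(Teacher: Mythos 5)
Your proof is correct and follows essentially the same route as the paper: restrict $\D$ to $H$, note it still satisfies $(*)$, apply induction to the splitting, and use the projection formula $(N\otimes\Z\D)\uparrow_{H}^{G}\cong N\uparrow_{H}^{G}\otimes\Z\D$ (the paper simply cites this as Frobenius Reciprocity from Brown's book rather than writing out the explicit isomorphism).
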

\begin{proof} If $\D=\bigsqcup_{\delta \in \Delta_{0}}  G / G_{\delta}$ is a $G$-set that satisfies condition $(*)$ then $\D$ has an $H$-orbit decomposition of the form $\bigsqcup_{\delta \in \Delta_{0}} (\bigsqcup_{g\in\Omega_{\delta}} H / H\cap G_{\delta}^{g})$, where $\Omega_{\delta}$ is a set of representatives of the double cosets $HgG_{\delta}$. Clearly $\D$ regarded as an $H$-set satisfies  condition $(*)$. Let $M$ be an $\FF$-projective $\Z H$-module, then by definition $M$ is a direct summand of $N \otimes  \Z \D$ for some $\Z H$-module $N$. Since induction is an exact functor, $M\uparrow_{H}^{G}$ is a direct summand of $   (N \otimes \Z \D)\uparrow_{H}^{G}$. The statement follows by the Frobenius Reciprocity $(N \otimes \Z \D)\uparrow_{H}^{G} \cong N \uparrow_{H}^{G} \otimes \Z \D$ (Exercise 2(a), 5, III \cite{brown-82}).
\end{proof}
\begin{LM} \label{permu}Suppose $G$ is a group of finite $\FF$-cohomological dimension equal to~ $n$. Then there is an $\FF$-projective resolution of $\Z$ of length $n$ consisting of permutation modules with finite stabilisers.
\begin{proof}
Since $\FF\cd G =n$, the general relative Schanuel's Lemma implies that the kernel $K_{n}$ of the standard $\FF$-projective resolution is $\FF$-projective and so $\Z(\D^{n}) \tos K_{n}$ splits, i.e. $K_{n}\oplus P \cong \Z(\D^{n})$. Let $\Z\hat{\D}$ be a module isomorphic to a  direct sum of countably many copies of $\Z(\D^{n})$. Then $K_{n}\oplus \Z \hat{\D}\cong \Z\hat{\D}$ and we have the required resolution:
\begin{equation*}\Z\hat{\D}\hookrightarrow  \Z(\D^{n-1})\oplus \Z\hat{\D}\to \dots\to\Z\D\tos\Z. \tag*{\qedhere}\end{equation*}
\end{proof}
\end{LM}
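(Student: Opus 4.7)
The plan is to construct the desired resolution by starting from the standard $\FF$-projective resolution, truncating it at dimension $n$, and then applying an Eilenberg-swindle argument to replace the top module (a direct summand of a permutation module) by an honest permutation module.

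First I would observe that the standard resolution $\mathbf P \tos \Z$ with $P_i = \Z(\D^i)$ already has all the desired shape: each $P_i$ is a permutation $\Z G$-module, and the stabiliser of a tuple $(\delta_0, \dots, \delta_i)$ is contained in $G_{\delta_0}$, which is an $\FF$-subgroup by condition $(*)$; hence all stabilisers are finite. The only issue is that the resolution is a priori infinite, so the task reduces to truncating at dimension $n$ while keeping permutation modules with finite stabilisers.

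Next, since $\FF\cd G = n$, the generalised relative Schanuel lemma (available in this setup via Lemma 2.7 of \cite{nucinkis-99}) implies that the $n$-th kernel $K_n$ of the standard resolution is $\FF$-projective. In particular the surjection $P_n = \Z(\D^n) \tos K_n$ splits, so we can write $\Z(\D^n) \cong K_n \oplus P$ for some $\Z G$-module $P$. Truncating gives an exact sequence $0 \to K_n \to P_{n-1} \to \cdots \to P_0 \to \Z \to 0$ of length $n$, but $K_n$ is only a direct summand of a permutation module, not itself a permutation module.

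The main (and essentially only) technical step is the Eilenberg swindle: setting $\Z\hat\D := \bigoplus_{i=1}^{\infty} \Z(\D^n)$, I would argue
\[
K_n \oplus \Z\hat\D \;\cong\; K_n \oplus \bigoplus_{i=1}^{\infty}(K_n \oplus P) \;\cong\; \bigoplus_{i=1}^{\infty}(K_n \oplus P) \;\cong\; \Z\hat\D,
\]
so adding an extra summand $\Z\hat\D$ (which is itself a permutation module with finite stabilisers, being a countable direct sum of copies of $\Z(\D^n)$) to the last two terms of the truncated resolution yields
\[
0 \to \Z\hat\D \to \Z(\D^{n-1}) \oplus \Z\hat\D \to \Z(\D^{n-2}) \to \cdots \to \Z\D \to \Z \to 0,
\]
with boundary $K_n \oplus \Z\hat\D \hookrightarrow P_{n-1} \oplus \Z\hat\D$ being the original inclusion on $K_n$ and the identity on $\Z\hat\D$. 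Exactness is preserved, every term is a permutation module with finite stabilisers, and the length is $n$, as required. I do not anticipate any serious obstacle beyond checking that the swindle is compatible with the $G$-action (which is automatic since the isomorphisms above are $\Z G$-linear rearrangements of summands).
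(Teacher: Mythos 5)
Your proposal is correct and follows essentially the same route as the paper: apply the relative Schanuel lemma to see that the $n$-th kernel of the standard resolution is $\FF$-projective, split off a complement inside $\Z(\D^{n})$, and absorb the kernel into a countable direct sum of copies of $\Z(\D^{n})$ via the Eilenberg swindle. The only difference is that you spell out the verification that the tuple stabilisers are finite and that the swindle isomorphisms are $\Z G$-linear, which the paper leaves implicit.
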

Note that in the proof above the relative Eilenberg swindle produces a permutation module; this does not hold for general $\FF$-projective modules. For further discussion consult Section 4,  \cite{nucinkis-00}.
\begin{cor}\label{gore} For any group $G$, $\Gcd G \leq \FF\cd G$.
\begin{proof} Every permutation $\Z G$-module with $\FF$-stabilisers is a Gorenstein projective $\Z G$-module by Lemma 2.21 \cite{cohodime}. The result now follows  from  Lemma \ref{permu}.
\end{proof}
\end{cor}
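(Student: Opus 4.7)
The plan is to assume $\FF\cd G = n$ is finite (otherwise the conclusion is vacuous) and exhibit a Gorenstein projective resolution of the trivial module $\Z$ of length at most $n$. Since $\Gcd G$ is, by definition, the Gorenstein projective dimension of $\Z$, producing such a resolution immediately gives the inequality.

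First I would invoke Lemma \ref{permu}, which was set up precisely for this purpose: it supplies an $\FF$-projective resolution
\[
0 \to P_n \to P_{n-1} \to \cdots \to P_1 \to P_0 \to \Z \to 0
\]
of length $n$ in which every $P_i$ is a permutation $\Z G$-module with finite (equivalently, $\FF$-) stabilisers. The key additional input, which Lemma \ref{permu} does not itself give but which the relative Eilenberg swindle used there does, is that one can arrange the modules to be genuine permutation modules rather than merely $\FF$-projective; this is essential because being a direct summand of a permutation module is not, in general, enough to force Gorenstein projectivity.

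Next I would cite Lemma 2.21 of \cite{cohodime}, which asserts that every permutation $\Z G$-module whose point-stabilisers are finite is Gorenstein projective. Applying this pointwise to each $P_i$ turns the resolution above into a Gorenstein projective resolution of $\Z$ of length at most $n$. By the standard dimension-shifting argument in Gorenstein homological algebra, this bounds the Gorenstein projective dimension of $\Z$ by $n$, hence $\Gcd G \leq n = \FF\cd G$.

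There is no real obstacle; the only subtle point to flag is the one already noted in the remark after Lemma \ref{permu}, namely that the argument really needs actual permutation modules (to feed into Lemma 2.21 of \cite{cohodime}), not arbitrary $\FF$-projectives, and this is exactly what the Eilenberg swindle in Lemma \ref{permu} provides.
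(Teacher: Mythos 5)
Your proof is correct and is essentially the paper's own argument: apply Lemma \ref{permu} to get a length-$n$ resolution of $\Z$ by permutation modules with finite stabilisers, note via Lemma 2.21 of \cite{cohodime} that these are Gorenstein projective, and conclude $\Gcd G \leq n = \FF\cd G$. Your remark that one needs genuine permutation modules (as produced by the relative Eilenberg swindle) rather than arbitrary $\FF$-projectives is exactly the subtlety the paper flags after Lemma \ref{permu}.
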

Martinez-P\'erez and Nucinkis prove using Mackey functors that for every virtually torsion-free group $G$ the equality $\vcd G = \FF\cd G$ holds  \cite{Martnu}. We give a proof of a weaker result, sufficient for our purpose, using an elementary method. 
\begin{prop}Let $G$ be torsion-free. Then $G$ has finite $\FF$-cohomological dimension equal to $n$ if and only if $G$ has finite cohomological dimension equal to~$n$.
\end{prop}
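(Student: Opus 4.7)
The plan is to prove the two inequalities $\FF\cd G \leq \cd G$ and $\cd G \leq \FF\cd G$, which together yield the desired equivalence with the same value $n$. Because $G$ is torsion-free, the only $\FF$-subgroup is trivial, so I may choose the $G$-set $\D$ satisfying condition $(*)$ to be $G$ itself, whence $\Z\D = \Z G$ as a $\Z G$-module.

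For $\FF\cd G \leq \cd G$, I would observe that any free $\Z G$-module on an index set $I$ can be written as $\bigl(\bigoplus_{i\in I}\Z\bigr)\otimes_{\Z} \Z G = N\otimes \Z\D$ with $N = \bigoplus_{i\in I}\Z$ carrying the trivial $G$-action. Hence every free $\Z G$-module is $\FF$-projective, and since direct summands of $\FF$-projectives are $\FF$-projective, every projective $\Z G$-module is $\FF$-projective. Consequently an ordinary projective resolution of $\Z$ of length $n$ is at the same time an $\FF$-projective resolution of the same length, giving $\FF\cd G \leq \cd G$.

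For the reverse inequality $\cd G \leq \FF\cd G$, suppose $\FF\cd G = n$ is finite and invoke Lemma \ref{permu} to obtain a length-$n$ $\FF$-projective resolution of $\Z$ whose terms are permutation $\Z G$-modules with finite stabilisers. Torsion-freeness forces each of these stabilisers to be trivial, so every term is a direct sum of copies of $\Z G$, i.e.\ a free $\Z G$-module. The resolution is then an honest length-$n$ free resolution of $\Z$, so $\cd G \leq n$.

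The argument is largely formal. The only substantive point, and the place where any real work has been done, is the appeal to Lemma \ref{permu}: without the extra permutation-module structure it provides, a generic $\FF$-projective module — an abstract direct summand of some $N\otimes \Z\D$ — carries no obvious reason to be $\Z G$-free in the torsion-free case. The upgrade from ``$\FF$-projective'' to ``permutation module with finite stabilisers'' is therefore the engine of the proof, and once it is in hand the torsion-freeness of $G$ converts ``finite stabilisers'' into ``trivial stabilisers'' for free.
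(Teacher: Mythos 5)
Your proof is correct, and the two halves compare differently with the paper's. The forward inequality $\FF\cd G\leq\cd G$ is in substance identical to the paper's: free modules become $\FF$-projective once one notices $\Z\D=\Z G$ for torsion-free $G$. One small point you should make explicit there: to conclude that a projective resolution of length $n$ witnesses $\FF\cd G\leq n$ you need it to be an \emph{$\FF$-split} resolution, not merely exact with $\FF$-projective terms; for torsion-free $G$ this means $\Z$-split, which holds because $\Z$ and all the kernels (being submodules of $\Z$-free modules) are free abelian groups. The paper records exactly this observation. For the reverse inequality your route genuinely differs: you invoke Lemma \ref{permu} to get a length-$n$ resolution by permutation modules with finite, hence trivial, stabilisers, and read it off as a free resolution; the paper instead applies the relative Schanuel lemma directly to the (bar) standard free resolution, which is $\FF$-split by the same torsion-free observation, to conclude that the $n$-th kernel is $\FF$-projective and therefore a direct summand of a free module, yielding a length-$n$ projective resolution. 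The two arguments rest on the same engine --- the relative Schanuel lemma, which is also what powers Lemma \ref{permu} --- but yours buys a genuinely free resolution and reuses a lemma already established, while the paper's is self-contained at this point and slightly more elementary in that it does not need the Eilenberg-swindle step hidden inside Lemma \ref{permu}.
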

\begin{proof}If $\cd G =n$, then by Proposition 2.6 VIII in \cite{brown-82} there is a  $\mathbb{Z}G$-free resolution $F_{*}$ of $\mathbb{Z}$  of length $n$. 
Since $G$ is a torsion-free group, any $\mathbb{Z}G$-free module is $\FF$-projective and any acyclic  $\mathbb{Z}$-split $\mathbb{Z}G$-complex is $\FF$-split.  This shows that $F_{*}$ is  an $\FF$-projective resolution of $\mathbb{Z}$ of length $n$.\\
Now we consider the standard $\Z G$-free resolution of $\mathbb{Z}$:  $$\ldots\rightarrow F_{n-1}\rightarrow F_{n-2}\rightarrow \ldots \rightarrow F_{0} \twoheadrightarrow \mathbb{Z},$$ where $F_{i} =\mathbb{Z} (G^{i+1})$. By the above this is an $\FF$-split sequence. 
By the relative general Schanuel's lemma  applied to  $K_{n} \hookrightarrow  F_{n-1}\rightarrow F_{n-2}\rightarrow \ldots \rightarrow F_{0} \twoheadrightarrow \mathbb{Z}$ it follows that $K_{n}$ is $\FF$-projective.  In particular $K_{n}$, is a direct summand of $F_{n}$ and so it is $\Z G$-projective. \end{proof}
\begin{LM}[Dimension shifting]\label{dimshi} Let $N_{m}\hookrightarrow  N_{m-1}\to \dots \to N_{0} \tos L$ be an $\FF$-split exact sequence of $\Z G$-modules such that $\FF\pd N_{i}\leq n$ for all $0\leq i \leq m$. Then $\F\pd L \leq m+n$.
\begin{proof} We argue by induction on $m$. If $m=0$ then $N_{0}\cong L$ and $\F\pd L \leq n$. Let $k\geq 1$ and assume that the statement holds  for $m \leq k -1$. Consider the $\FF$-split short exact sequence $N_{k} \stackrel{\iota}{\hookrightarrow } N_{k-1}\tos \ima \iota$. By the induction hypothesis, $\F\pd (\ima \iota) \leq n+1$.  We have an $\FF$-split resolution of $L$ $\ima \iota \hookrightarrow  N_{k-2}\to \dots \to N_{0}\tos L$ of length $k-1$ made of modules of $\FF$-projective dimension at most $n$+1  and by the induction hypothesis we obtain $\F\pd L \leq (k-1)+(n+1) =k+n$.
\end{proof}
\end{LM}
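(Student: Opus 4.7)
The plan is to argue by induction on $m$, the length of the given $\FF$-split sequence. The base case $m=0$ is immediate, since then $N_{0}\cong L$ and the assumption $\FF\pd N_{0}\leq n$ already gives the conclusion (with $m+n=n$).

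For the inductive step, the key move is to split the long sequence at the leftmost spot. Given the $\FF$-split exact sequence
\[
N_{k}\stackrel{\iota}{\hookrightarrow} N_{k-1}\to N_{k-2}\to\cdots\to N_{0}\twoheadrightarrow L,
\]
I would break it into the short exact sequence $N_{k}\hookrightarrow N_{k-1}\twoheadrightarrow \im\iota$ together with the shorter sequence $\im\iota\hookrightarrow N_{k-2}\to\cdots\to N_{0}\twoheadrightarrow L$. Since a long exact $\FF$-split sequence remains $\FF$-split after chopping at any spot (one checks this directly at each $\FF$-subgroup, where splitting is equivalent to each short piece $\ker\to \cdot\to\im$ being split), both resulting sequences are $\FF$-split.

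The heart of the argument is a baby case: if $A\hookrightarrow B\twoheadrightarrow C$ is $\FF$-split with $\FF\pd A,\FF\pd B\leq n$, then $\FF\pd C\leq n+1$. This follows from the relative generalised Schanuel lemma, which is available via Lemma 2.7 of \cite{nucinkis-99} as already used in the setup of the $\FF$-projective dimension: an $\FF$-projective resolution of $B$ of length $n$ together with an $\FF$-projective resolution of $A$ of length $n$ produces, via the relative version of the horseshoe construction, an $\FF$-projective resolution of $C$ of length at most $n+1$. Applying this baby case to $N_{k}\hookrightarrow N_{k-1}\twoheadrightarrow \im\iota$ yields $\FF\pd(\im\iota)\leq n+1$. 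Then the shorter sequence $\im\iota\hookrightarrow N_{k-2}\to\cdots\to N_{0}\twoheadrightarrow L$ has length $k-1$ and all terms of $\FF$-projective dimension at most $n+1$, so the induction hypothesis delivers $\FF\pd L\leq (k-1)+(n+1)=k+n$, as required.

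The main obstacle I anticipate is the bookkeeping around $\FF$-splitness rather than the inductive structure itself. Specifically, one must make sure that the ``horseshoe'' used to glue an $\FF$-projective resolution of $A$ to one of $B$ in the baby case produces an $\FF$-split resolution of $C$; this is not automatic from the usual projective-class formalism, but it follows because an exact sequence is $\FF$-split precisely when it splits over every $\FF$-subgroup, and splittings over a fixed $\FF$-subgroup combine in the standard way. Once this is set up, the remainder is a routine induction.
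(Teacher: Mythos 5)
Your proof is correct and follows essentially the same route as the paper: induct on $m$, split off the short exact sequence $N_{k}\hookrightarrow N_{k-1}\twoheadrightarrow \im\iota$ to get $\FF\pd(\im\iota)\leq n+1$, and recurse on the remaining length-$(k-1)$ sequence. If anything you are more careful than the paper, which disposes of the case $m=1$ (your ``baby case'') by appealing to the induction hypothesis --- an appeal that is only literally valid for $k\geq 2$ --- whereas you supply the relative Schanuel/horseshoe argument that this step actually requires.
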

\begin{prop}\label{withallf}Let $N \hookrightarrow  G \stackrel{\pi}\tos Q$ be a group extension with $\FF\cd Q\leq m$. Moreover, assume that any finite extension $H$ of $N$ has $\FF\cd H \leq n$. Then $\FF\cd G \leq n+m$.
\end{prop}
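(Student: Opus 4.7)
The plan is to lift an $\FF$-projective resolution of $\Z$ over $\Z Q$ to one over $\Z G$ via the quotient map $\pi$, and then resolve the terms individually using the hypothesis on finite extensions of $N$. Concretely, by Lemma \ref{permu} applied to $Q$, there is an $\FF$-split exact sequence of $\Z Q$-modules
\[
P_{m}\hookrightarrow P_{m-1}\to \cdots \to P_{0}\tos \Z
\]
in which every $P_{i}$ is a permutation module $\Z[Q/F]$ with $F$ a finite subgroup of $Q$ (more precisely, a direct sum of such). Viewing this as a sequence of $\Z G$-modules via the inflation along $\pi$, Lemma \ref{2.2} guarantees that it remains $\FF$-split over $\Z G$. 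Note that inflation turns each permutation summand $\Z[Q/F]$ into $\Z[G/\pi^{-1}(F)]$.

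First I would observe that for every finite subgroup $F\leq Q$, the preimage $H:=\pi^{-1}(F)$ fits into an extension $N\hookrightarrow H\tos F$, hence by hypothesis $\FF\cd H\leq n$. Therefore $\Z$ admits an $\FF$-projective resolution of length $n$ over $\Z H$. Inducing this resolution up from $H$ to $G$ produces a resolution of $\Z[G/H]=\Z\uparrow_{H}^{G}$ of length $n$; by Lemma \ref{proind} every term is still $\FF$-projective, and by Lemma \ref{Lemma8.2nuc99} the induced sequence is $\FF$-split over $\Z G$. Consequently $\FF\pd_{G}\Z[G/H]\leq n$, and hence $\FF\pd_{G}P_{i}\leq n$ for every $i$.

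Finally, applying the Dimension Shifting Lemma \ref{dimshi} to the inflated $\FF$-split exact sequence $P_{m}\hookrightarrow \cdots \to P_{0}\tos \Z$ of $\Z G$-modules, each of whose terms has $\FF$-projective dimension at most $n$, yields $\FF\pd_{G}\Z\leq m+n$, that is $\FF\cd G\leq n+m$.

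The only subtle point, and the part I would be most careful to check, is that the inflated modules are genuinely permutation modules of the form $\Z[G/\pi^{-1}(F)]$ (so that the resolution afforded by Lemma \ref{permu} actually has the shape needed to invoke the hypothesis), and that inducing a resolution bounds the $\FF$-projective dimension of the induced module; both are routine but essential, and combining them cleanly via the dimension-shifting lemma is the heart of the argument.
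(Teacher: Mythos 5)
Your proof is correct and follows essentially the same route as the paper: Lemma \ref{permu} supplies a permutation-module resolution over $\Z Q$, Lemma \ref{2.2} keeps it $\FF$-split over $\Z G$, the inflated permutation modules are identified as modules induced from finite extensions $\pi^{-1}(F)$ of $N$, Lemmas \ref{proind} and \ref{Lemma8.2nuc99} bound their $\FF$-projective dimension by $n$, and Lemma \ref{dimshi} finishes. The ``subtle point'' you flag — that $Q/F$ viewed as a $G$-set via $\pi$ is $G/\pi^{-1}(F)$ — is exactly the observation the paper makes explicit, so nothing is missing.
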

\begin{proof}For any finite extension $H$ of $N$, let
$$P_{n}\hookrightarrow  P_{n-1}\to \dots \to P_{0} \tos \mathbb{Z}$$ be a $\FF$-projective resolution of $\mathbb{Z}$ over $\mathbb{Z}H$. By   Lemma \ref{proind} and  Lemma \ref{Lemma8.2nuc99}, the resolution
$$P_{n}\uparrow_{H}^{G}\hookrightarrow  P_{n-1}\uparrow_{H}^{G}\to \dots \to P_{0}\uparrow_{H}^{G} \tos \mathbb{Z}\uparrow_{H}^{G}$$ is an $\FF$-projective resolution of  $\mathbb{Z}\uparrow_{H}^{G}$ over $\mathbb{Z}G$.\\
Now, Lemma \ref{permu} implies that there is an  $\FF$-projective resolution of $\mathbb{Z}$ over $\mathbb{Z}Q$ of the form:
$$\mathbb{Z}\D \cong K \hookrightarrow \mathbb{Z}\D_{m-1}\to \dots \to \mathbb{Z}\D_{0} \tos \mathbb{Z}.$$ By Lemma \ref{2.2} the sequence above is $\FF$-split when regarded as a $\mathbb{Z}G$-sequence.
Every permutation module $\mathbb{Z}\D_{i}$ and $\Z\D$ when regarded as a $\Z G$-module is isomorphic to some $\oplus_{j \in J} \mathbb{Z}\uparrow_{H_{j}}^{G}$ where $| H_{j} : N | < \infty$. To see this, consider the case of a homogeneous  $Q$-set $\Omega=Q/F$, and  regard $\Omega$ as  a $G$-set via $\pi$. Then $\Omega$ is isomorphic to $G/\pi^{-1}(F)$. If $|F|<\infty$ then $F\cong K/N$ where $[N : K]<\infty$ and $K \cong \pi^{-1}(F)$.    By the above $\FF\pd( \mathbb{Z}\uparrow_{H_{j}}^{G})<n$ and so the assertion follows by  Lemma \ref{dimshi}.
\end{proof}
\begin{cor}\label{directpro}If $G=H \times K$, where $\FF\cd H \leq n$ and $\FF\cd K \leq m$, then $\FF\cd G \leq n+m$. 
\begin{proof}
By Proposition \ref{withallf} we can assume $|K| < \infty$ and  we regard $G$ as an extension of $K$ by $H$. Any finite extension of $K$ by a finite subgroup of $H$ is finite and so it has $\FF$-cohomological dimension equal to $0$. The result now follows by Proposition~\ref{withallf}.
\end{proof}
\end{cor}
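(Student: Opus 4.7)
The plan is to apply Proposition \ref{withallf} twice, exploiting the symmetry of the direct product by interchanging the roles of $H$ and $K$ in the two applications.

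First I would dispose of the special case where one factor is finite. Suppose $F$ is a finite group and $H$ is any group with $\FF\cd H\leq n$; I claim $\FF\cd(H\times F)\leq n$. For this, apply Proposition \ref{withallf} to the extension
\[
F\hookrightarrow H\times F\stackrel{\pi'}{\tos} H.
\]
The quotient is $H$, so $\FF\cd$ of the quotient is at most $n$. Every finite extension of the normal subgroup $F$ is a finite group, and hence has $\FF\cd=0$. Proposition \ref{withallf} therefore yields $\FF\cd(H\times F)\leq 0+n=n$.

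With the special case in hand I would attack the general case using the opposite extension
\[
H\hookrightarrow H\times K\stackrel{\pi}{\tos} K,
\]
where the quotient $K$ satisfies $\FF\cd K\leq m$. To invoke Proposition \ref{withallf} I need every finite extension of $H$ relevant to the argument to have $\FF\cd\leq n$. The useful observation here is that in a direct product, $\pi^{-1}(F)=H\times F$ for every finite subgroup $F$ of $K$, so the finite extensions of $H$ that enter the proof of Proposition \ref{withallf} are all of the form $H\times F$ with $F$ finite. By the first step these have $\FF\cd\leq n$, and Proposition \ref{withallf} then gives $\FF\cd(H\times K)\leq n+m$.

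The only delicate point I anticipate is the reduction to the special case in the second paragraph: it relies crucially on the fact that the inverse image of a finite subgroup under a projection of a direct product splits as a direct product, a feature that would fail for a general group extension. Once this splitting is used, everything else is a direct application of the proposition just proved.
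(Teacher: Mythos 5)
Your proof is correct and is essentially the paper's own argument: both handle the case of a finite factor by applying Proposition \ref{withallf} to the extension with the finite group as kernel and $H$ as quotient, and then treat the general case via $H\hookrightarrow H\times K\tos K$, using that the relevant finite extensions of $H$ are exactly the subgroups $H\times F$ with $F\leq K$ finite. Your explicit remark that only the preimages $\pi^{-1}(F)$ enter the proof of Proposition \ref{withallf} makes precise a point the paper leaves implicit, but the route is the same.
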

Proposition \ref{withallf} is the relative analogue of Corollary 5.2 in \cite{martinez-perez-02} but in the context of $\FF$-cohomology we are able to strengthen the result, as we shall see in Theorem \ref{fin}.

Since for virtually torsion-free groups the notion of $\FF$-cohomological dimension coincides with the notion of virtual cohomological dimension it is conceivable that taking finite extensions of groups of finite $\FF$-cohomological dimension does not raise the dimension. There are examples of non-virtually torsion-free groups  that are extensions of two virtually torsion-free groups of finite virtual cohomological dimension \cite{MR0473038}, but nonetheless these admit  finite dimensional  classifying spaces for proper actions \cite{bln}.

 In order to  reduce the extension problem to extensions by  groups of prime order we need the following observation.
\begin{LM}\label{p} Let $\mathfrak{P}$ be the class of $p$-groups. When considering the standard $\FF$-projective resolution $P_{*} \tos Z$ we can replace the $G$-set  $\D$ by $\D_{\mathfrak{P}}$, where  $\D_{\mathfrak{P}}= \bigsqcup _{P \leq G, \,P \in \mathfrak{P}\cap \mathfrak{F}}G/P$.
\begin{proof}The result is an immediate consequence of  i) and ii) of Proposition 2.14 \cite{nuclepre}.
\end{proof}
\end{LM}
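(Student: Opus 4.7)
The plan is to verify that replacing $\Delta$ by $\Delta_{\mathfrak{P}}$ in the construction of the standard resolution still yields an $\FF$-projective resolution of $\mathbb{Z}$. I would break this into three steps: $\FF$-projectivity of each term, exactness, and $\FF$-splitness.

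For $\FF$-projectivity of the terms, I would observe that $\Delta_{\mathfrak{P}}$ is by definition a disjoint union of coset sets $G/P$ with $P$ a finite $p$-subgroup of $G$; consequently $\Delta_{\mathfrak{P}}^{i+1}$ decomposes as a disjoint union of $G$-orbits whose stabilisers are intersections of conjugates of finite $p$-subgroups, and in particular finite. Permutation modules with finite stabilisers are $\FF$-projective (Corollary 2.4 of \cite{nucinkis-99}), so each $\mathbb{Z}(\Delta_{\mathfrak{P}}^{i+1})$ is $\FF$-projective. Exactness of the resulting simplicial chain complex is formal given $\Delta_{\mathfrak{P}} \neq \emptyset$ and follows exactly as in the standard case.

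The heart of the argument is showing $\FF$-splitness. Here I would first verify that $\Delta_{\mathfrak{P}}^{Q} \neq \emptyset$ for every finite $p$-subgroup $Q \leq G$: any coset $gP \in \Delta_{\mathfrak{P}}$ with $g^{-1}Qg \subseteq P$ provides such a fixed point, and such a $P$ exists because $Q$ itself can be taken in the indexing family. By the standard simplicial contracting homotopy argument (a fixed point in $\Delta_{\mathfrak{P}}$ produces a chain-level splitting), the resolution splits when restricted to $\mathbb{Z}Q$ for every finite $p$-subgroup $Q$.

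To pass from $p$-subgroups to arbitrary $\FF$-subgroups, I would invoke Rim's theorem (equivalently, a standard transfer argument): a short exact sequence of $\mathbb{Z}F$-modules with $F$ finite splits if and only if it splits when restricted to each Sylow $p$-subgroup of $F$. Applying this dimensionwise (or via the induced long exact sequence on $\Ext$) to each finite subgroup $F$ of $G$ shows the resolution splits restricted to $\mathbb{Z}F$ for every $F \in \FF$, hence is $\FF$-split. The main obstacle I anticipate is bookkeeping the Rim/transfer step cleanly rather than any conceptual difficulty, which is precisely why the author delegates this to parts (i) and (ii) of Proposition 2.14 of \cite{nuclepre}.
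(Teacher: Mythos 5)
Your proof is correct, and it supplies in full the content that the paper simply delegates to parts (i) and (ii) of the cited Proposition 2.14: namely that the terms $\mathbb{Z}(\Delta_{\mathfrak{P}}^{i+1})$ are $\FF$-projective permutation modules with finite stabilisers, and that the complex is $\FF$-split because splitting over every finite subgroup $F$ reduces, via the transfer (the obstruction class in $\Ext^1_{\mathbb{Z}F}$ is annihilated by the coprime indices of the Sylow subgroups), to splitting over the Sylow subgroups of $F$, which you obtain from the fixed point $eQ \in \Delta_{\mathfrak{P}}^{Q}$ and the simplicial contracting homotopy. This is exactly the argument one expects behind the citation, so the route is the same; your write-up just makes it self-contained, and correctly identifies the one non-formal point, namely that $\Delta_{\mathfrak{P}}$ fails condition $(*)$ for finite subgroups of non-prime-power order, so the Sylow reduction is genuinely needed rather than an appeal to Lemma 6.1 of \cite{nucinkis-99}.
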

\begin{rem}In view of Chouinard's Theorem \cite{chouinard} it is natural to ask if $\FF$-cohomology can be reduced  to a cohomological theory relative to the  family $\mathfrak{E}$ of finite elementary abelian subgroups. This is not the case; to see this let $P$ be a non-elementary abelian finite $\PP$-group  and let $\{H_{i}\}$ be the family of conjugacy classes of its elementary abelian subgroups. The short exact sequence $K \hookrightarrow  \oplus_{i} \Z P/H_{i} \stackrel{\pi}{\tos} \Z$ is $\mathfrak{E}$-split but does not split over $\Z P$.
 Let  $\sigma_{i} \in \Z [P/H_{i}]$ denote the sum of the cosets of $H_{i}$ in $P$, $\sigma_{i}=\sum_{pH_{i} \in P/H_{i}}pH_{i}$. Since $P/H_{i}$ is a transitive $P$-set, the only well-defined $\Z P$-map from $\Z$ to $\Z P/H_{i}$ is the map $1 \mapsto m_{i} \sigma_{i}$ where $m_{i}$ is a non-zero integer. Any $\Z P$-map $\iota : \Z \to\oplus_{i} \Z P/H_{i}\cong \Z P/H_{1}\oplus \Z P/H_{2}\oplus \dots \oplus \Z P/H_{n}$ is defined by $1\mapsto (m_{1}\sigma_{1},\dots,m_{n}\sigma_{n} )$ for some choice of $\{m_{1}, m_{2},\ldots ,m_{n}\}$.  Since $ \pi\circ \iota(1)=\sum_{i=1}^{n}m_{i}[P : H_{i}]=\sum_{i=1}^{n}m_{i}p^{n_{i}}\neq 1$ ($n_{i}\neq 0$ for all $i$), $\pi$ does not split over $\Z P$. 
\end{rem}

\begin{thm}\label{fin} Let $N \hookrightarrow  G \tos Q$ be a group extension. Moreover, assume that for every subgroup $H$ of $G$ with $\F\cd H \leq n$, every  extension $L$ of $H$ by a group of prime order has $\FF\cd L \leq n $. Then $\FF\cd G \leq \F\cd H+\F\cd Q$.
\begin{proof}
Arguing as in  Proposition \ref{withallf} the problem can be reduced to extensions by groups of prime power order using  Lemma \ref{p}. Now, if $N \hookrightarrow  G \tos P$ is such an extension then the quotient group $P$ is nilpotent of prime power order and so for any $p^{n}$ dividing  $|P|$ there exists a normal subgroup $S$ of $G$,  $N\leq S\leq G$ such that $S/N$ has order $p^{n}$ and the result is obvious.
\end{proof}
\end{thm}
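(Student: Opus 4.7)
The plan is to mimic the proof of Proposition \ref{withallf} but with the $G$-set $\D$ replaced by $\D_{\PP}$, and then bootstrap the prime-order hypothesis through a composition series argument to handle all finite $p$-group quotients.

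First I would invoke Lemma \ref{p} to replace the standard $\FF$-projective resolution of $\Z$ over $\Z Q$ by one built from the $Q$-set $\D_{\PP}$. Running Lemma \ref{permu} in this restricted setting (stabilisers of products of $\D_{\PP}$ are intersections of conjugates of finite $p$-subgroups, hence still $p$-subgroups) produces a length $m=\F\cd Q$ resolution
\[ \Z\hat{\D}_{\PP}\hookrightarrow \Z\D_{m-1}\oplus \Z\hat{\D}_{\PP}\to\dots\to\Z\D_{\PP}\tos\Z \]
where each $\Z\D_i$ is a permutation $\Z Q$-module with finite $p$-group stabilisers. By Lemma \ref{2.2} this sequence remains $\FF$-split when viewed over $\Z G$, and, exactly as in Proposition \ref{withallf}, each $\Z\D_i$ then decomposes as a direct sum of modules of the form $\Z\uparrow_{H_j}^{G}$, where $H_j=\pi^{-1}(F_j)$ for some finite $p$-subgroup $F_j$ of $Q$; in particular $H_j$ is an extension of $N$ by a finite $p$-group.

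The key step, replacing the ``finite extension'' input of Proposition \ref{withallf}, is to show that every such $H_j$ satisfies $\F\cd H_j\leq n:=\F\cd N$. Since $F_j$ is a finite $p$-group, it is nilpotent, so we can pick a subnormal chain
\[ N=H_j^{(0)}\trianglelefteq H_j^{(1)}\trianglelefteq\dots\trianglelefteq H_j^{(r)}=H_j \]
with each successive quotient $H_j^{(i+1)}/H_j^{(i)}$ of order $p$. Each $H_j^{(i)}$ is a subgroup of $G$, so I can apply the hypothesis of the theorem inductively: starting from $\F\cd H_j^{(0)}=\F\cd N\leq n$, each step lifts through a prime-order extension inside $G$ and keeps the bound $\leq n$. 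After $r$ applications we reach $\F\cd H_j\leq n$, and then Lemma \ref{proind} together with Lemma \ref{Lemma8.2nuc99} yields $\F\pd(\Z\uparrow_{H_j}^{G})\leq n$.

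Finally I would assemble everything with the dimension shifting result, Lemma \ref{dimshi}: the length $m$ $\FF$-split resolution of $\Z$ over $\Z G$ coming from $Q$ consists of modules of $\FF$-projective dimension at most $n$, giving $\F\cd G\leq n+m=\F\cd N+\F\cd Q$. The only genuine subtlety is the inductive bootstrap in the previous paragraph — checking that the hypothesis can legitimately be iterated along the composition series inside $G$, which works precisely because the hypothesis is stated for all subgroups of $G$ of $\FF$-cohomological dimension $\leq n$, not merely for $N$ itself.
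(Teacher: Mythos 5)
Your proof is correct and follows essentially the same route as the paper's: reduce to finite $p$-group quotients via Lemma \ref{p}, then use nilpotency of $p$-groups to iterate the prime-order hypothesis along a subnormal chain with prime-order quotients from $N$ up to each $\pi^{-1}(F_j)$, and finish exactly as in Proposition \ref{withallf} via Lemmas \ref{2.2}, \ref{proind}, \ref{Lemma8.2nuc99} and \ref{dimshi}. The paper compresses all of this into two sentences (``the result is obvious''); your write-up supplies precisely the details it omits.
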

Note first that if $N \hookrightarrow  G \tos Q$ is a group extension such that $\gd_{\FF}N=n$ and $|Q|=k$ then $\gd_{\FF}G \leq nk$ \cite{MR1851258}.
It is unknown if the finiteness of the $\FF$-cohomological dimension is preserved under taking (finite) extensions.  However this is the case for countable  elementary amenable groups.
\begin{prop}Let $N \hookrightarrow  G \tos Q$ be a group extension with $\FF\cd N\leq n$, $\FF\cd Q\leq m$ and such that  $G$ is countable elementary amenable. Then $\FF\cd G \leq n+m+1$.
\end{prop}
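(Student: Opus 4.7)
The plan is to reduce the extension problem to the additivity of the Hirsch length $\h$ under group extensions, and then to exploit the fact that for countable elementary amenable groups $\h$ controls the $\FF$-cohomological dimension up to a single additive unit.

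First I would bound the Hirsch lengths of the two factors. For any group $\Gamma$ one has the chain $\h(\Gamma) \leq \cd_{\mathbb{Q}} \Gamma \leq \FF\cd \Gamma$, where the second inequality is recalled in the introduction of this paper, and the first is a consequence of Stammbach's theorem together with the Hillman--Linnell result that elementary amenable groups of finite rational cohomological dimension are virtually solvable of finite Hirsch length. Applying this to $N$ and $Q$ (both are countable elementary amenable as a subgroup and a quotient of $G$) gives $\h(N) \leq n$ and $\h(Q) \leq m$.

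Next, Hirsch length is additive in extensions, so $\h(G) = \h(N) + \h(Q) \leq n+m$; in particular $G$ has finite Hirsch length. Because $G$ is countable elementary amenable with finite Hirsch length, the Flores--Nucinkis theorem \cite{flores-05} cited in the introduction gives the reverse estimate $\FF\cd G \leq \h(G) + 1$. Combining the two inequalities yields $\FF\cd G \leq n+m+1$, as claimed.

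The delicate point is the extra $+1$, which is genuinely necessary rather than a slack in the method: a countably infinite locally finite group has $\h = 0$ but $\FF\cd = 1$, and this single unit of slack can be absorbed by either $N$ or $Q$ when the torsion of $G$ is unbounded. Accordingly, the main obstacle is to verify that the form of Flores--Nucinkis being invoked really delivers the explicit inequality $\FF\cd G \leq \h(G)+1$ and not merely the qualitative equivalence between finiteness of $\FF\cd G$, $\cd_{\mathbb{Q}}G$ and finiteness of Hirsch length; if only the qualitative statement is available from the reference, one would recover the numerical bound by tracking dimensions through the inductive construction of countable elementary amenable groups from finite and abelian pieces via extensions (Proposition \ref{withallf} and Corollary \ref{directpro}) and directed unions.
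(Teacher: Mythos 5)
Your argument is essentially the paper's own proof: bound $\h N$ and $\h Q$ via the chain $\h \leq \cd_{\mathbb{Q}} \leq \FF\cd$, invoke Hillman's additivity $\h G = \h N + \h Q$, and finish with Flores--Nucinkis. The explicit numerical bound you flag as the delicate point is indeed available without re-running the inductive construction: the paper gets it from $\FF\cd G \leq \cd_{\FF}G \leq \hd_{\FF}G + 1$ for countable groups, combined with Theorem 1 of \cite{flores-05}, which identifies $\hd_{\FF}G$ with $\h G$ for elementary amenable groups of finite Hirsch length.
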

\begin{proof} The rational cohomological dimension of a group $G$ $\cd_{\mathbb{Q}}$ is defined as the $\mathbb{Q}G$-projective dimension of the trivial $\mathbb{Q}G$-module $\mathbb{Q}$. If the trivial $\Z G$-module $\Z$ admits a resolution of length $n$  made of permutation modules with $\FF$-stabilisers then tensoring it with $\mathbb{Q}$ over $\Z$ we obtain  that $\cd_{\mathbb{Q}}G\leq n$. In particular for every group $\cd_{\mathbb{Q}}G\leq \FF\cd G $. Corollary 3.3 \cite{nucinkis-00}  implies that for any group $G$  $ \FF\cd G \leq \cd_{\mathfrak{F}} G$. Let $\h G$ be the Hirsch length of an elementary amenable group $G$.  The inequality $\h G \leq\cd_{\mathbb{Q}}G$ holds by Lemma 2 in \cite{hillman-91}. Let $\hd_{R}G$ denote the homological dimension of $G$ over $R G$. If $G$ is any countable group $G$ and $R$ is a commutative ring of coefficients, then the following are well known \cite{bieri-81, nucinkis-04}:
$$\hd_{R} G \leq \cd_{R} G \leq \hd_{R} G +1,$$ 
$$\hd_{\mathfrak{F}} G \leq \cd_{\mathfrak{F}} G \leq \hd_{\mathfrak{F}} G +1.$$ 
The class of elementary amenable groups is subgroup-closed and quotient-closed. By Theorem 1 in \cite{hillman-91}  $\h G = \h N +\h Q$,  and an immediate application of Theorem 1 in \cite{flores-05} gives the result. 
\end{proof}
Furthermore, Serre's construction included in 5.2 V of \cite{DD} shows that, given   a finite extension $ N \hookrightarrow  G \tos Q$ with $\FF\cd N =n$ and  $|Q|=k$, there exists an exact $\Z G$-resolution of $\Z$ made of permutation modules with  stabilisers in $\FF$ of length $nk$. However, it is unclear if this resolution is $\FF$-split and  this suggest a more general question.
\begin{question}\label{ques1} Suppose $G$ is a group that admits a resolution of finite length of the trivial $\Z G$-module $\Z$ made of permutation modules with stabilisers in $\FF$. Does $G$ have finite $\FF$-cohomological dimension?
\end{question}
\begin{rem}
Arguing as in Corollary \ref{gore}, every group admitting a resolution as in the question above has finite Gorenstein cohomological dimension. It is unknown if the converse holds.  
\end{rem}
\section{Branch groups, rational cohomological dimension and ${\scriptstyle \mathbf H}\mathfrak F$}

As mentioned in the introduction by the result of L\"uck  \cite{lueck-00}, every group of finite $\FF$-cohomological dimension which has a bound on the lengths of its $\FF$-subgroups admits a finite dimensional classifying space for proper actions.\
 
 In this section we calculate the rational cohomological dimension of some finitely generated periodic groups with no such bound.  Moreover, we look into the problem of determining which branch groups lie in the class ${\scriptstyle \mathbf H}\mathfrak F$. We give a purely algebraic criterion, from which it follows that the first Grigorchuk group $\mathfrak{G}$ is not contained in ${\scriptstyle \mathbf H}\mathfrak F$.
 
 Usually if one wants to prove that a group $G$ has finite $\cd_{\mathbb{Q}}G$ either one finds a suitable finite dimensional $G$-space or decomposes the group $G$ in  order to control its rational cohomological dimension. On the other hand one usually proves that $G$ has infinite $\cd_{\mathbb{Q}}G$ in the following way. Since having finite $\cd_{\mathbb{Q}}G$ is a subgroup-closed property it is enough to find an infinite  chain of subgroups of  strictly increasing rational cohomological dimension. For the groups we consider in this section there is no such chain, although  we are able to establish their dimension because there is a chain of groups of strictly increasing cohomological dimension that uniformly embeds  in them.

A group $G$ is  $R$-torsion-free if the order of every finite subgroup of $G$ is invertible in the ring $R$. 
 \begin{thm}\cite[V 5.3]{DD}\label{DD} Let $G$ be a group and let $H$ be a subgroup of $G$ of finite index. If $G$ is  $R$-torsion-free, then $\cd_{R} H= \cd_{R}G$.
\end{thm}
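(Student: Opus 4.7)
The plan is to prove the two inequalities separately.

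The inequality $\cd_R H \leq \cd_R G$ is elementary: any $RG$-projective resolution $P_{\bullet} \tos R$ of length $\cd_R G$ restricts to an $RH$-projective resolution of the same length, since $RG$ is free as an $RH$-module with a transversal for $H\backslash G$ as basis.

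For the reverse inequality, let $n = \cd_R H$ (assumed finite, else there is nothing to prove) and take any $RG$-projective resolution $P_{\bullet} \tos R$. By Schanuel it suffices to show that the $n$-th kernel $K_{n}$ is $RG$-projective; restriction already exhibits $K_{n}$ as $RH$-projective, so the point is to upgrade this. Equivalently, one must show $\Ext^{n+1}_{RG}(R, M) = 0$ for every $RG$-module $M$. The natural first move is Eckmann--Shapiro combined with the coincidence $\ind_{H}^{G} \cong \coind_{H}^{G}$ valid for finite-index subgroups, which yields
\begin{equation*}
\Ext^{n+1}_{RG}(R[G/H], M) \cong \Ext^{n+1}_{RH}(R, M|_{H}) = 0
\end{equation*}
by the hypothesis $\cd_R H = n$.

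What remains is to transfer this vanishing from the induced module $R[G/H]$ to the trivial module $R$. If $[G:H]$ were invertible in $R$, the averaging splitting $1 \mapsto [G:H]^{-1}\sum_{i} g_{i}H$ of the augmentation $R[G/H] \tos R$ would exhibit $R$ as a direct summand of $R[G/H]$ and conclude the proof at once; in particular this dispatches the case $R = \mathbb{Q}$ that is relevant to the paper's applications. In general $[G:H]$ need not be a unit in $R$ (for example $R = \Z$, $G = \Z$, $H = 2\Z$), and the $R$-torsion-freeness of $G$ must compensate in a subtler way; this is essentially the content of Serre's theorem on cohomological dimension of finite-index subgroups, and is where I expect the main difficulty to lie. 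Rather than splitting $R[G/H] \tos R$ directly, the standard algebraic approach (Swan) builds, using the combinatorial structure of $G$ together with the hypothesis on orders of finite subgroups, an auxiliary complex that realises $R$ as a summand of a complex of modules for which the vanishing established above propagates via long exact sequences of $\Ext$, without inflating the effective dimension.
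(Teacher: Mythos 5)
The paper offers no proof of this statement---it is quoted verbatim from Dicks--Dunwoody \cite[V.5.3]{DD}---so the only question is whether your argument stands on its own, and it does not quite. The inequality $\cd_R H\leq\cd_R G$ and the Eckmann--Shapiro computation $\Ext^{j}_{RG}(R[G/H],M)\cong\Ext^{j}_{RH}(R,M|_H)=0$ for $j>n$ are fine, and you have correctly located the crux; but the crux is then only named (``Serre's theorem'', ``Swan'') rather than proved, and since the statement being cited \emph{is} Serre's theorem in its $R$-linear form, this is circular. Moreover, the mechanism you sketch for closing the argument---transferring the vanishing from $R[G/H]$ to $R$ in degree exactly $n+1$ via an auxiliary complex ``without inflating the effective dimension''---is not how the standard proof runs, and I do not see how to make it work as stated: upgrading ``$K_n$ is $RH$-projective'' to ``$K_n$ is $RG$-projective'' for a finite-index subgroup with non-invertible index is essentially equivalent to the theorem itself.

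The actual argument has two genuinely separate steps, and the dimension \emph{does} get inflated in the first one. Step 1: form the $G$-complex $\prod_{\sigma\in G/H}Y_\sigma$ (equivalently, the tensor product over $R$ of translates of a length-$n$ free $RG_\sigma$-resolution, with $G$ permuting the factors). Its cells have \emph{finite} stabilisers---a stabilising element $g$ satisfies $g^{k!}\in\bigcap_\sigma G_\sigma$ acting freely, hence has finite order---so the associated chain complex consists of permutation modules $R[G/F]$ with $F$ finite; the hypothesis that $G$ is $R$-torsion-free is used precisely here to make these modules $RG$-projective. This yields only $\cd_R G\leq n\cdot[G:H]<\infty$. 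Step 2: with finiteness in hand, take $m=\cd_R G$ and a free module $F$ with $H^m(G,F)\neq 0$; since induction and coinduction agree for finite-index subgroups, $H^m(G,F)\cong H^m(H,F|_H)$, forcing $\cd_R H\geq m$. Your Shapiro identity is the right tool for Step 2, but Step 1 is the content of the theorem and is absent from your write-up. (Your observation that the case of $[G:H]$ invertible in $R$ is immediate is correct, and does cover the application $R=\mathbb{Q}$ made in this paper, but not the theorem as stated.)
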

\begin{definition}\cite[1.1]{sauer} Let $H$ and $K$ be countable groups:
A map $\phi : H \to K $ is called a \emph{uniform embedding} if for every sequence of pairs
$(\alpha_{i}, \beta_{i}) \in H \times H$ one has:
$$\alpha_{i}^{-1}\beta_{i}\to \infty \mbox{ in } H \iff \phi(\alpha_{i})^{-1}\phi(\beta_{i})\to \infty \mbox{ in } K.$$
Where $\to \infty$ means eventually leaving every finite subset.
\end{definition}
Note that this embedding is not necessarily a group homomorphism. Sauer proved the following remarkable result.
\begin{thm}\cite[1.2]{sauer}\label{shalom} Let $G$ and $H$ be countable groups and let $R$ be a commutative ring. If  $\cd_{R}H <\infty$ and $H$ uniformly embeds  in $ G$, then  $\cd_{R}H \leq \cd_{R}G$.
\end{thm}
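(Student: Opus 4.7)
The plan is to convert the uniform embedding into a \emph{measured coupling} between $H$ and $G$, and then use this coupling to transport an $RG$-resolution of $R$ to an $RH$-resolution of no greater length; this reduces the purely coarse-geometric hypothesis to an algebraic comparison of projective dimensions.

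First, I would invoke the Sauer--Shalom mechanism producing, from a uniform embedding $\phi\colon H \to G$ of countable groups, a standard Borel space $\Omega$ with a $\sigma$-finite measure $\mu$ and commuting free measure-preserving actions of $H$ and $G$, where the $H$-action admits a Borel fundamental domain $F$ of finite $\mu$-measure. Heuristically $\Omega$ is a measurable incarnation of the graph of $\phi$: the $G$-action is right translation on a chosen $G$-orbit, and the $H$-action is pulled back through a measurable approximate inverse to $\phi$. Countability of both groups is essential to make the construction measurable, while the uniformity of $\phi$ is precisely what forces $F$ to have finite covolume.

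Second, fix an $RG$-projective resolution $P_\bullet \twoheadrightarrow R$ of length $n = \cd_R G < \infty$. Simple functions on $\Omega$ supported on finitely many $G$-translates of $F$ assemble into an $(RH,RG)$-bimodule $M$ that is free as a left $RH$-module (a direct sum indexed by a Borel transversal) and flat as a right $RG$-module. The complex $M \otimes_{RG} P_\bullet$ is then an acyclic complex of free $RH$-modules of length at most $n$, and the augmentation $M \otimes_{RG} R \to R$ is an isomorphism thanks to $\mu(F) < \infty$. This yields an $RH$-resolution of $R$ of length $\leq n$, giving $\cd_R H \leq n$.

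The principal obstacle is the first step: producing the coupling from a uniform embedding without any group-homomorphism hypothesis requires delicate measurable choices of lifts and is the technical heart of Sauer's argument. A secondary subtlety is that the naive tensor-transfer in the second step most naturally yields only flatness, so that one initially extracts $\hd_R H \leq n$; upgrading to a statement for $\cd_R$ then relies on the Bieri--Eckmann inequality $\cd_R H \leq \hd_R H + 1$ for countable groups, combined with a dimension-shifting argument at the top of the resolution to absorb the extra unit.
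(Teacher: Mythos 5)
A preliminary remark: the paper does not prove this statement at all --- it is imported as a black box from Sauer's paper --- so your proposal has to be judged against Sauer's actual argument rather than anything in the text above.

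The decisive gap is your first step. A uniform embedding between arbitrary countable groups does \emph{not} yield a measure-preserving coupling in which $H$ has a finite-measure fundamental domain. The Gromov--Shalom mechanism you invoke needs two inputs you do not have: it starts from a quasi-isometry (which produces a \emph{topological} coupling, i.e.\ commuting continuous proper cocompact actions), and it then uses amenability to average and manufacture an invariant measure. A uniform embedding is strictly weaker than a quasi-isometry (no coarse surjectivity), and the theorem assumes no amenability; indeed the whole point of its application in this paper is to the isometrically embedded copies of $\Z^{k^{n}}$ inside periodic branch groups, where the embedding is not even a homomorphism and no natural coupling exists. This is precisely why Sauer's quasi-isometry invariance of $\cd_{R}$ (his Theorem 1.1) is confined to amenable groups, while Theorem 1.2 is proved by a measure-free argument running in the \emph{opposite} direction to yours: the hypothesis $\cd_{R}H=n<\infty$ supplies a top degree with $\Ext^{n}_{RH}(R,F)\neq 0$ for a suitable induced coefficient module, and the uniform embedding is used to build an $\ell^{\infty}$-type transfer showing the corresponding $\Ext^{n}$ over $RG$ is nonzero, whence $\cd_{R}G\geq n$. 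That your sketch never uses the hypothesis $\cd_{R}H<\infty$ is a telltale sign it is not tracking the real proof.

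Even granting a coupling, two further steps fail as stated. The claimed isomorphism $M\otimes_{RG}R\cong R$ is wrong: the $G$-coinvariants of your bimodule are (simple functions on) $\Omega/G$, not $R$, which is exactly why ME-induction in the Gaboriau--Sauer style compares dimensions over $L^{\infty}(\Omega/G)$ rather than producing honest resolutions of the trivial module. And your closing Bieri--Eckmann patch only delivers $\cd_{R}H\leq \hd_{R}H+1\leq \cd_{R}G+1$; no mechanism is actually given for absorbing the extra unit, so even in the best case you would prove a weaker inequality than the one asserted.
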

Two groups $H$ and $G$ are said to be \emph{commensurable} if there exist $H_{1}\leq H$,  $G_{1}\leq G$ such that  $[H : H_{1}]< \infty$, $[G : G_{1}]< \infty$ and $H_{1}\cong G_{1}$. A group $G$ is \emph{multilateral} if it is infinite and  commensurable to some proper direct power of itself.

\begin{thm}\label{regu} Let $G$ be a finitely generated  multilateral group.
Then $\cd_{\mathbb{Q}}G=\infty$.
\begin{proof} If $A$ and $B$ are two commensurable groups then by Theorem \ref{DD} it follows that $\cd_{\mathbb{Q}}A=\cd_{\mathbb{Q}}B$.  Let $G$ be a finitely generated infinite group commensurable with $G^{k}$ for some $k>1$. First we show that $G$ is commensurable to $G^{k^{n}}$ for any $n\geq 1$. We proceed by induction on $n$. The base case  $n=1$ is obvious. Now $G^{k^{n+1}}\cong (G^{k^{n}})^{k}$,  by the induction hypothesis $G$ is commensurable to $G^{k^{n}}$ and so $G^{k}$ is commensurable to $(G^{k^{n}})^{k}$. Since $G$ is commensurable to $G^{k}$ and  commensurability is transitive, we obtain that $G$ is  commensurable to $G^{k^{n+1}}$. 
By Exercise IV.A.12 \cite{harpe} there is an isometric embedding $\mathbb{Z}  \hookrightarrow G$, from which it follows that there is an isometric embedding $\mathbb{Z}^{k^{n}}  \hookrightarrow G^{k^{n}}$. An application of Theorem \ref{shalom} gives $k^{n}=\cd_{\mathbb{Q}} \mathbb{Z}^{k^{n}} \leq \cd_{\mathbb{Q}} G^{k^{n}}=\cd_{\mathbb{Q}}G$. Since the last inequality holds for every non-negative integer $n$ we have $\cd_{\mathbb{Q}}G=\infty$.   
\end{proof}
\end{thm}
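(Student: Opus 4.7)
The plan is to produce arbitrarily large uniform embeddings $\mathbb{Z}^{k^n} \hookrightarrow G^{k^n}$ and transfer their rational cohomological dimension back to $G$ by combining Sauer's theorem with Theorem \ref{DD}. The starting point is that commensurability of $G$ with some proper power $G^k$ should, by iteration, upgrade to commensurability with $G^{k^n}$ for every $n\geq 1$; that is the step that produces the unbounded dimension count $k^n\to\infty$.

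First I would establish by induction on $n$ that $G$ is commensurable with $G^{k^n}$. If finite-index subgroups $G_{1}\leq G$ and $H_{1}\leq G^{k^n}$ witness the inductive hypothesis with $G_{1}\cong H_{1}$, then $G_{1}^{k}$ has finite index in $G^{k}$, $H_{1}^{k}$ has finite index in $(G^{k^n})^{k}\cong G^{k^{n+1}}$, and $G_{1}^{k}\cong H_{1}^{k}$, so $G^{k}$ is commensurable with $G^{k^{n+1}}$; combining with the standing hypothesis $G\sim G^{k}$ and the transitivity of commensurability yields $G\sim G^{k^{n+1}}$.

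Next, since every positive integer is invertible in $\mathbb{Q}$, every group is automatically $\mathbb{Q}$-torsion-free, so Theorem \ref{DD} applies with $R=\mathbb{Q}$ to any finite-index inclusion. Unpacking the definition of commensurability, this gives $\cd_{\mathbb{Q}}A=\cd_{\mathbb{Q}}B$ whenever $A$ and $B$ are commensurable, and in particular $\cd_{\mathbb{Q}}G=\cd_{\mathbb{Q}}G^{k^n}$ for every $n\geq 1$.

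Finally, a finitely generated infinite group contains an isometrically embedded copy of $\mathbb{Z}$ (Exercise IV.A.12 in \cite{harpe}), hence taking $k^n$-fold products yields a uniform embedding $\mathbb{Z}^{k^n}\hookrightarrow G^{k^n}$. Theorem \ref{shalom} then forces $k^{n}=\cd_{\mathbb{Q}}\mathbb{Z}^{k^n}\leq \cd_{\mathbb{Q}}G^{k^n}=\cd_{\mathbb{Q}}G$, and letting $n\to\infty$ gives the conclusion. I do not foresee a real obstacle; the only delicate point is to set up the iteration so that one genuinely obtains commensurability with $G^{k^n}$ (not merely with $G^{k}$), since it is this strengthening that feeds arbitrarily large $\mathbb{Z}^{m}$ into Sauer's theorem.
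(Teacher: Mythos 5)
Your proposal is correct and follows essentially the same route as the paper: iterate commensurability to get $G\sim G^{k^{n}}$, note every group is $\mathbb{Q}$-torsion-free so Theorem \ref{DD} equates the rational cohomological dimensions of commensurable groups, and feed the uniform embedding $\mathbb{Z}^{k^{n}}\hookrightarrow G^{k^{n}}$ into Sauer's theorem. Your inductive step is in fact slightly more carefully spelled out (exhibiting the finite-index subgroups witnessing that commensurability passes to $k$-th powers) than the paper's.
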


The converse of the theorem above does not hold. In fact the finitely generated ${\scriptstyle \mathbf H}_{2}\FF$-group of infinite cohomological dimension $\Z \wr\Z$ is not commensurable to any of its proper direct powers. \\
Tyrer Jones in \cite{tyrer} constructs a finitely generated non-trivial group $G$ isomorphic to its own square; as an immediate application of Theorem \ref{regu} we obtain that $\cd_{\mathbb{Q}}G=\infty$. 
\begin{rem}If $G$ is a finitely generated multilateral group, then  the proof of  Theorem 3 \cite{smith} extends verbatim  by replacing $G^{n}$ with $G^{k^{n}}$ to conclude that $\asdim G=\infty$.
For many groups the finiteness of the asymptotic dimension agrees with the finiteness of the rational cohomological dimension, although Sapir in \cite{sapir} constructed a $4$-dimensional closed aspherical manifold M such that the fundamental group $\pi_{1}(M)$ coarsely contains an expander, and so $\pi_{1}(M)$  has infinite asymptotic dimension but finite cohomological dimension.
\end{rem}

Note that if $G$ is a finitely generated infinite group such that $G^{n} \hookrightarrow G$ with $n>1$, then arguing as in Theorem \ref{regu} we obtain that $G$ has infinite rational cohomological dimension.  Of course if $G$ is not periodic  this shows that it contains a free abelian group of infinite countable rank. For example, it is well-known that  for  Thompson's group $\bold{F}$ we have the embedding  $\bold{F} \times \bold{F}\hookrightarrow \bold{F}$. 
\begin{cor}\label{reg} Every finitely generated regular branch group has infinite rational cohomological dimension.
\begin{proof}
For the precise definition of a regular branch group the reader is referred to \cite{branch}.  
Let $\mathcal{T}$ be an $m$-ary regular rooted tree and $G$ a finitely generated regular branch group acting on $\mathcal{T}$. By definition if $G$ is branching over $K$ then $[G : K]<\infty$ and $[\psi(K) : K^{m}]< \infty$, where $\psi$ is the embedding of the stabiliser of the first level in the direct product $G^{m}$. Since $\psi(K) \cong K$ we have that $K$ is commensurable with $K^{m}$. $K$ is finitely generated and so an application of Theorem \ref{regu} gives $\cd_{\mathbb{Q}}K = \infty$.  The finiteness of the rational cohomological dimension is preserved under taking subgroups  and so  we have  $\cd_{\mathbb{Q}}G=\infty$.
\end{proof} 
\end{cor}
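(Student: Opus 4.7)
The plan is to reduce the statement to Theorem \ref{regu} by exhibiting the branching subgroup $K$ as a finitely generated multilateral group. Recall that for a regular branch group $G$ acting on an $m$-ary rooted tree $\mathcal{T}$ (with $m\geq 2$), there is a finite-index subgroup $K\leq G$ and an embedding $\psi$ of the stabiliser of the first level into the direct product $G^m$ such that $\psi(K)$ contains the copy of $K^m$ sitting in $G^m$ as a finite-index subgroup.

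First I would observe that $\psi$ is injective on the level-1 stabiliser by definition, so in particular $\psi(K)\cong K$. Transporting along this isomorphism, $K$ contains a subgroup isomorphic to $K^m$ of finite index; combined with $[G:K]<\infty$ this exhibits $K$ and $K^m$ as commensurable. Since $m\geq 2$ and $K$ is infinite (because a branch group is infinite and $K$ has finite index in $G$), this shows that $K$ is multilateral in the sense of the preceding definition.

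Next I would check that $K$ is finitely generated: since $G$ is finitely generated by hypothesis and $[G:K]<\infty$, the Reidemeister--Schreier argument gives that $K$ is finitely generated as well. Theorem \ref{regu} then applies to $K$ and yields $\cd_{\mathbb{Q}}K=\infty$. Because the rational cohomological dimension is monotone under passage to subgroups (any $\mathbb{Q}G$-projective resolution of $\mathbb{Q}$ restricts to a $\mathbb{Q}K$-projective resolution, as $\mathbb{Q}G$ is free over $\mathbb{Q}K$), we conclude $\cd_{\mathbb{Q}}G\geq \cd_{\mathbb{Q}}K=\infty$.

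The main technical point to be careful about is the commensurability step: one must use that $\psi$ is genuinely an embedding (not merely a homomorphism) in order to transfer the finite-index inclusion $K^m\leq \psi(K)$ inside $G^m$ back to an honest finite-index inclusion of a copy of $K^m$ inside $K$ itself. Once this is in place, the rest of the argument is a direct appeal to Theorem \ref{regu} and the monotonicity of $\cd_{\mathbb{Q}}$ under subgroups.
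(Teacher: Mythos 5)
Your proposal is correct and follows essentially the same route as the paper: identify the branching subgroup $K$, use $\psi(K)\cong K$ together with $[\psi(K):K^m]<\infty$ to see $K$ is a finitely generated multilateral group, apply Theorem \ref{regu}, and conclude by monotonicity of $\cd_{\mathbb{Q}}$ under subgroups. The only difference is that you spell out the routine verifications (finite generation of $K$ via finite index, infiniteness of $K$) that the paper leaves implicit.
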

Since the Gupta-Sidki group $\overline{\overline{\Gamma}}$ is a finitely generated regular  branch group  \cite{branch} we obtain as an application of Corollary \ref{reg} that  $\cd_{\mathbb{Q}}\overline{\overline{\Gamma}}=\infty$. Note that since  $\overline{\overline{\Gamma}}$ is a $p$-group with no bound on the orders of its elements it has no bound on the lengths of its $\FF$-subgroups.

\begin{rem} 
We have proved Corollary \ref{reg} in the context of regular branch groups for convenience only. In fact it was pointed out to the author by Laurent Bartholdi,  that also holds for the more general branch groups defined as follows.\\
A group $G$ is \emph{branch} if   it admits a \emph{branch
structure}: there exist a sequence of groups $\{G_{i}\}_{i\in \mathbb{N}}$, a sequence of positive integers $\{n_{i}\}_{i\in \mathbb{N}}$ and a sequence of homomorphisms $\{\phi_i\}_{i\in \mathbb{N}}$ such that $G\cong G_{0}$, and for each $i$,  \begin{enumerate}
\item
$\phi_i: G_i\to G_{i+1}\wr\Sigma_{n_i}$ has finite kernel and finite cokernel,
\item
 the image of each $\phi_i $ acts transitively on $\Sigma_{n_i}$, and the stabiliser of any $j\in{1,\dots,n_i} $ maps onto $G_{i+1}$.
\end{enumerate}

The structure is non-trivial if all $n_i \ge 2$, and the $\phi_i$ are injective.
It is easy to see that a branch group as above is a branch group  in the geometric sense of  \cite{branch}.
Now, let $G$ be a finitely generated infinite group that admits a sequence of groups $\{G_{i}\}_{i\in \mathbb{N}}$  and a sequence of integers $\{n_{i}\}_{i\in \mathbb{N}}$,   such that  $G\cong G_{0}$ and for each $i$, $G_{i}$ is commensurable with $G_{i+1}^{n_{i}}$. Arguing as in Theorem \ref{regu} it is easy to see that, if all  $n_{i} \geq 2$, the rational cohomological dimension of $G$ is infinite. Arguing as in Corollary \ref{reg} we deduce that every finitely generated branch group has infinite rational cohomological dimension.
\end{rem}
A group $G$ is said to have \emph{jump cohomology of height n over} $R$ if there exists an integer $n\geq 0$ such that any subgroup $H$ of finite cohomological dimension over $R$ has $\cd_{R}(H)\leq n$.
\begin{thm}\cite[3.2]{petro}\label{petro} Let $G$ be an $R$-torsion-free ${\scriptstyle \mathbf H}\mathfrak F$-group  with jump cohomology of height $n$ over $R$. Then, $\cd_{R}G \leq n$. In particular, any ${\scriptstyle \mathbf H}\mathfrak F$-group $G$ has jump rational cohomology of height $n$ if and only if $\cd_{\mathbb{Q}} G \leq n$.
\end{thm}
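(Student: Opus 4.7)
The approach is to proceed by transfinite induction on the ${\scriptstyle \mathbf H}\mathfrak F$-hierarchy $\{{\scriptstyle \mathbf H}_\alpha \mathfrak F\}_\alpha$, where ${\scriptstyle \mathbf H}_0 \mathfrak F = \mathfrak F$, ${\scriptstyle \mathbf H}_{\alpha+1} \mathfrak F$ consists of those groups admitting an admissible action on a finite-dimensional contractible CW-complex with all cell stabilizers in ${\scriptstyle \mathbf H}_\alpha \mathfrak F$, and ${\scriptstyle \mathbf H}_\alpha \mathfrak F = \bigcup_{\beta < \alpha} {\scriptstyle \mathbf H}_\beta \mathfrak F$ at limit ordinals. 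I would induct on the least $\alpha$ with $G \in {\scriptstyle \mathbf H}_\alpha \mathfrak F$, which is necessarily $0$ or a successor. The essential observation is that both being $R$-torsion-free and having jump cohomology of height $n$ pass to every subgroup of $G$, so these hypotheses are automatically inherited by every cell stabilizer encountered in the argument.

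For $\alpha = 0$, the group $G$ is finite with $|G|$ invertible in $R$, so the usual averaging idempotent exhibits $R$ as a direct summand of $RG$, and $\cd_R G = 0 \leq n$. For a successor ordinal $\alpha$, pick a witnessing $d$-dimensional contractible $G$-CW-complex $X$ and consider the augmented cellular chain complex
$$0 \to C_d(X;R) \to \dots \to C_0(X;R) \to R \to 0,$$
whose terms are direct sums of permutation modules $\ind_{G_\sigma}^G R$. By Shapiro's lemma, $\pd_{RG}\bigl(\ind_{G_\sigma}^G R\bigr) = \cd_R G_\sigma$, and the inductive hypothesis bounds each of these by $n$. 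A standard dimension-shifting argument (analogous to Lemma \ref{dimshi}, but using ordinary projective resolutions over $RG$) then yields $\cd_R G \leq n + d < \infty$. Feeding this finite value back into the jump cohomology hypothesis applied to $G$ itself collapses the dimension to $\cd_R G \leq n$, completing the induction.

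The "in particular" statement follows almost immediately. Every group is automatically $\mathbb{Q}$-torsion-free because orders of finite subgroups are invertible in $\mathbb{Q}$, so the first part supplies $(\Rightarrow)$. The reverse implication is trivial: $\cd_{\mathbb{Q}}$ is monotone under passage to subgroups, so if $\cd_{\mathbb{Q}} G \leq n$ then every subgroup $H \leq G$ satisfies $\cd_{\mathbb{Q}} H \leq n$, and the jump condition holds vacuously. The only real content of the proof is the dimension shift in the inductive step together with the identification $\pd_{RG}(\ind_H^G R) = \cd_R H$; the transfinite scheme itself is routine, and the main obstacle is simply to confirm that dimension shifting behaves additively when all terms of a finite-length resolution have bounded projective dimension.
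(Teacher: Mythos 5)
This theorem is quoted from Petrosyan \cite[3.2]{petro} and the paper supplies no proof of its own, so there is nothing internal to compare against; your argument is correct and is essentially the standard (and Petrosyan's) one: transfinite induction up the ${\scriptstyle \mathbf H}\mathfrak F$-hierarchy, using the augmented cellular chain complex of the witnessing finite-dimensional contractible complex to get $\cd_{R}G\leq n+\dim X<\infty$, and then applying the jump hypothesis to the subgroup $G\leq G$ itself to collapse the bound to $n$. Two cosmetic remarks: you only need the inequality $\pd_{RG}(\ind_{G_{\sigma}}^{G}R)\leq \cd_{R}G_{\sigma}$, which follows by inducing up a projective resolution (induction is exact and preserves projectives), rather than the full Eckmann--Shapiro equality; and the reverse implication in the rational case is not ``vacuous'' --- it holds because $\cd_{\mathbb{Q}}G\leq n$ forces every subgroup to have $\cd_{\mathbb{Q}}\leq n$.
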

\begin{LM}\label{counta}
Let $G$ be a countable  group with $ \cd_{\mathbb{Q}} G < \infty$. Then there exists a finitely generated subgroup $H$ of $G$  such that $$\cd_{\mathbb{Q}}H\leq \cd_{\mathbb{Q}}G \leq \cd_{\mathbb{Q}} H+1.$$
Moreover, if  $\FF\cd G< \infty$ then there  exists a finitely generated subgroup $K$ such that 
$$\FF\cd K\leq \FF\cd G \leq \FF\cd K+1. $$

\end{LM}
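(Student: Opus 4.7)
The plan is to exhibit $G$ as a countable ascending union of finitely generated subgroups and then combine subgroup monotonicity of the relevant dimension with a ``plus-one'' bound for countable directed unions. Using the countability of $G$, enumerate $G=\{g_1,g_2,\ldots\}$ and set $G_i=\langle g_1,\ldots,g_i\rangle$, so that each $G_i$ is finitely generated, $G_i\le G_{i+1}$, and $G=\bigcup_{i\ge 1}G_i$.

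For the rational part, every group is $\mathbb{Q}$-torsion-free, so subgroup monotonicity (a special case of Theorem~\ref{DD}) gives $\cd_{\mathbb{Q}}G_i\le\cd_{\mathbb{Q}}G<\infty$ for every $i$. Thus $\{\cd_{\mathbb{Q}}G_i\}_i$ is a bounded sequence of non-negative integers and attains its supremum at some $G_N$. I would then invoke the classical Bieri union bound for a countable directed union,
$$\cd_{\mathbb{Q}}G\;\le\;\sup_i\cd_{\mathbb{Q}}G_i+1\;=\;\cd_{\mathbb{Q}}G_N+1,$$
proved by a Milnor $\varprojlim^1$ argument applied to the inverse system of cochain complexes. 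Setting $H:=G_N$ produces the desired subgroup.

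For the $\FF\cd$ half the strategy is parallel. Subgroup monotonicity $\FF\cd K\le\FF\cd G$ for $K\le G$ is in \cite{nucinkis-00}, so the $\FF\cd G_i$ are uniformly bounded above by $\FF\cd G$ and their supremum is achieved at some $G_N$. It then suffices to establish the $\FF$-cohomological union bound
$$\FF\cd\!\left(\bigcup_i G_i\right)\;\le\;\sup_i\FF\cd G_i+1,$$
and to take $K:=G_N$. I would prove the bound by choosing $\FF$-projective resolutions of length at most $n:=\sup_i\FF\cd G_i$ over each $\mathbb{Z}G_i$ and assembling them into an $\FF$-split complex over $\mathbb{Z}G$ via a mapping telescope, using that $\FF$-split sequences are characterised by splitting upon restriction to $\FF$-subgroups (so filtered colimits preserve $\FF$-splitness), followed by a Milnor $\varprojlim^1$ argument contributing at most one extra dimension.

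The main obstacle is precisely this $\FF$-cohomological union bound: while the $\mathbb{Q}$-version is classical, in the relative setting one must also check that the telescoped complex consists of genuine $\FF$-projective, and ideally permutation, modules rather than merely $\FF$-flat ones. I would address this by combining the relative Schanuel argument of Lemma~\ref{permu} with the relative Eilenberg swindle used there to upgrade the limiting resolution to one by permutation modules with $\FF$-stabilisers in length $n+1$.
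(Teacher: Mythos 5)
Your argument is correct and is in substance the same as the paper's, which simply cites Theorem 4.3 of Bieri and Proposition 2.5 of \cite{nucinkis-00} --- these are precisely the ``$\sup+1$'' bounds for countable ascending unions that you set out to prove, so you are unpacking the citations rather than diverging from them. Two small points: subgroup monotonicity of $\cd_{\mathbb{Q}}$ is not an instance of Theorem \ref{DD} (that result is the finite-index \emph{equality} for $R$-torsion-free groups); the inequality $\cd_{\mathbb{Q}}H\leq\cd_{\mathbb{Q}}G$ for arbitrary $H\leq G$ is just restriction of a projective resolution along the free ring extension $\mathbb{Q}H\subseteq\mathbb{Q}G$. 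Also, the ``main obstacle'' you flag in the $\FF$-case is already handled by the paper's own machinery: the module-level telescope $\bigoplus_i\mathbb{Z}\uparrow_{G_i}^{G}\hookrightarrow\bigoplus_i\mathbb{Z}\uparrow_{G_i}^{G}\tos\mathbb{Z}$ is $\FF$-split because every finite subgroup of $G$ lies in some $G_i$ and hence fixes a coset mapping to $1$, each summand has $\FF\pd\leq\sup_i\FF\cd G_i$ by Lemmas \ref{proind} and \ref{Lemma8.2nuc99}, and Lemma \ref{dimshi} with $m=1$ then gives the extra $+1$; no upgrade to permutation modules is needed for this lemma.
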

\begin{proof} The statement for the rational cohomological dimension follows from Theorem 4.3 in \cite{bieri-81} and for the $\FF$-cohomological dimension it follows from Proposition 2.5 in \cite{nucinkis-00}.
\end{proof}

 We say that a group $G$ is   \emph{strongly multilateral} if it is multilateral and every finitely generated subgroup of $G$ is  commensurable  to some direct power of $G$.
\begin{thm}\label{mainn}Every finitely generated  strongly multilateral group    has jump rational cohomology
of height $1$. 
\begin{proof} Let $G$ be a  finitely generated strongly multilateral group. Then $G$ by Theorem \ref{regu} has infinite rational cohomological dimension. 
Suppose $H$ is a  finitely generated infinite subgroup of $G$, then by hypothesis $H$ is commensurable with some direct power of $G$ and so by Theorem \ref{DD} $\cd_{\mathbb{Q}}H=\infty$. 
 Suppose now that $H$ is an  infinitely generated subgroup of $G$ of finite  rational cohomological dimension. By Lemma \ref{counta} there exists $K\leq H$ such that $K$ is finitely generated and  $\cd_{\mathbb{Q}}K\leq \cd_{\mathbb{Q}}H \leq \cd_{\mathbb{Q}} K+1$. By the above $K$ can not be infinite and so $\cd_{\mathbb{Q}}H=1$.  
 \end{proof}
\end{thm}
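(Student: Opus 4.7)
The plan is to show that every subgroup $H$ of $G$ with $\cd_{\mathbb{Q}} H < \infty$ must satisfy $\cd_{\mathbb{Q}} H \leq 1$, by forcing every finitely generated subgroup of $G$ to be either finite or of infinite rational cohomological dimension, and then invoking the descent to finitely generated subgroups.

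First I would observe that every group is $\mathbb{Q}$-torsion-free in the sense of Theorem \ref{DD} (since every positive integer is invertible in $\mathbb{Q}$), so Theorem \ref{DD} applies across any finite-index inclusion and hence, by concatenation, commensurable groups have the same rational cohomological dimension. Combined with Theorem \ref{regu}, since $G$ is multilateral we already have $\cd_{\mathbb{Q}} G = \infty$, and since $G$ sits inside $G^{k}$ as a direct factor, the subgroup-closedness of $\cd_{\mathbb{Q}}$ gives $\cd_{\mathbb{Q}} G^{k} = \infty$ for every $k \geq 1$.

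Next I would handle an arbitrary finitely generated subgroup $K \leq G$. By strong multilaterality, $K$ is commensurable with some direct power $G^{k}$. If $K$ is infinite then $G^{k}$ is also infinite, so $k \geq 1$, and the commensurability-invariance established above yields $\cd_{\mathbb{Q}} K = \cd_{\mathbb{Q}} G^{k} = \infty$. Consequently, any finitely generated subgroup of $G$ with finite rational cohomological dimension must be finite, and hence have $\cd_{\mathbb{Q}}$ equal to $0$.

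Finally, let $H \leq G$ be arbitrary with $\cd_{\mathbb{Q}} H < \infty$. Since $G$ is finitely generated it is countable, so $H$ is countable and Lemma \ref{counta} produces a finitely generated subgroup $K \leq H$ with
\[ \cd_{\mathbb{Q}} K \leq \cd_{\mathbb{Q}} H \leq \cd_{\mathbb{Q}} K + 1. \]
The left inequality forces $\cd_{\mathbb{Q}} K < \infty$, whence by the previous paragraph $K$ is finite and $\cd_{\mathbb{Q}} K = 0$, so $\cd_{\mathbb{Q}} H \leq 1$, as required. The only subtle step is the commensurability-invariance of $\cd_{\mathbb{Q}}$ used to transfer $\cd_{\mathbb{Q}} G^{k} = \infty$ to $K$; once this is in hand, the rest of the argument is essentially a bookkeeping exercise applying Theorem \ref{regu} and Lemma \ref{counta}.
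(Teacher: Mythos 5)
Your proposal is correct and follows essentially the same route as the paper: infinite finitely generated subgroups are forced to have infinite rational cohomological dimension via strong multilaterality, commensurability invariance of $\cd_{\mathbb{Q}}$ (Theorem \ref{DD}, applicable since every group is $\mathbb{Q}$-torsion-free), and Theorem \ref{regu}, after which Lemma \ref{counta} handles all remaining subgroups of finite dimension. Your version merely makes explicit a couple of points the paper leaves implicit (the $\mathbb{Q}$-torsion-free observation and the $k\geq 1$ case distinction), and correctly states the conclusion as $\cd_{\mathbb{Q}}H\leq 1$.
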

\begin{cor}\label{4.8}
If $G$ is a finitely generated  strongly multilateral group, then $G$ is not in ${\scriptstyle \mathbf H}\mathfrak F$.
\begin{proof}The group $G$ has jump rational cohomology of height $1$ but infinite rational cohomological dimension and so by Theorem \ref{petro} $G \notin {\scriptstyle \mathbf H}\mathfrak F$.
\end{proof}
\end{cor}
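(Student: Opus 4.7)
The plan is to assemble the corollary directly from the three preceding results: Theorem \ref{regu}, Theorem \ref{mainn}, and Theorem \ref{petro}. All the substantive work has already been done, so the proof is essentially a matter of playing these off against each other via a one-line contradiction.

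First I would invoke Theorem \ref{regu}: since $G$ is strongly multilateral it is in particular multilateral and finitely generated, hence $\cd_{\mathbb{Q}} G = \infty$. Next I would apply Theorem \ref{mainn} to conclude that $G$ has jump rational cohomology of height $1$. These two facts together already say everything the corollary needs; the role of strong multilaterality (as opposed to plain multilaterality) is precisely to control all finitely generated infinite subgroups simultaneously, giving the height-$1$ jump property.

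To finish, I would argue by contradiction. Suppose $G \in {\scriptstyle \mathbf H}\mathfrak F$. Then by the ``if and only if'' part of Theorem \ref{petro}, any ${\scriptstyle \mathbf H}\mathfrak F$-group with jump rational cohomology of height $n$ satisfies $\cd_{\mathbb{Q}} G \leq n$; applied to $n = 1$, this would yield $\cd_{\mathbb{Q}} G \leq 1$, contradicting the infiniteness established in the first step. Hence $G \notin {\scriptstyle \mathbf H}\mathfrak F$, as required.

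There is no real obstacle here, since the technical content is absorbed into Theorem \ref{mainn} and Theorem \ref{petro}; the only thing to verify is that the hypothesis of Theorem \ref{petro} is met, which is immediate because $\mathbb{Q}$ makes any group automatically $\mathbb{Q}$-torsion-free only if torsion orders are invertible in $\mathbb{Q}$ — however, this is already handled in the formulation of the second sentence of Theorem \ref{petro}, which applies to arbitrary ${\scriptstyle \mathbf H}\mathfrak F$-groups without the torsion-free hypothesis. Thus no additional check is needed.
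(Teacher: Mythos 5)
Your proposal is correct and follows essentially the same route as the paper: infinite $\cd_{\mathbb{Q}}G$ from Theorem \ref{regu}, jump rational cohomology of height $1$ from Theorem \ref{mainn}, and then the rational case of Theorem \ref{petro} to exclude $G$ from ${\scriptstyle \mathbf H}\mathfrak F$. The only difference is cosmetic (you phrase the last step as a contradiction), and your observation that the second sentence of Theorem \ref{petro} needs no torsion-freeness hypothesis in the rational case is exactly the point the paper relies on.
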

The first Grigorchuk group $\mathfrak{G}$ is an infinite periodic finitely generated amenable group  \cite{grigo80}.  $\mathfrak{G}$ can be obtained as a subgroup of the automorphism group of the rooted binary tree. Since $\mathfrak{G}$ has infinite locally finite subgroups \cite{lfgri}, it has no bound on the lengths of its $\FF$-subgroups. For the definition and further details the reader should consult \cite{branch} or  \cite{harpe}. 
\begin{thm}\label{main} The  first Grigorchuk group $\mathfrak{G}$   has jump rational cohomology of height $1$, and has infinite rational cohomological dimension. Hence $\mathfrak{G}$ is not in ${\scriptstyle \mathbf H}\mathfrak F$.
\begin{proof} By VIII.14 and .15 \cite{harpe} $\mathfrak{G}$ is commensurable with its square,  infinite and finitely generated.  Any finitely generated infinite subgroup of $\mathfrak{G}$ it is commensurable with $\mathfrak{G}$ \cite{G-W}  and so by Corollary \ref{4.8}  $\mathfrak{G}\notin{\scriptstyle \mathbf H}\mathfrak F$.
\end{proof}
\end{thm}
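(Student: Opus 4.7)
The plan is to verify that $\mathfrak{G}$ fits the hypotheses of Corollary \ref{4.8} by checking that it is finitely generated and strongly multilateral, and then let the machinery of Theorems \ref{regu}, \ref{mainn} and \ref{petro} do the rest. All the difficult work has already been absorbed into these previous results, so the argument is essentially a bookkeeping exercise citing structural properties of $\mathfrak{G}$ proved elsewhere.

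First I would recall from de la Harpe's book (sections VIII.14--15 of \cite{harpe}) that $\mathfrak{G}$ is infinite, finitely generated, and possesses a subgroup of finite index which is isomorphic to $\mathfrak{G} \times \mathfrak{G}$; in particular $\mathfrak{G}$ is commensurable to $\mathfrak{G}^{2}$ and hence is multilateral in the sense of Section 4. This already yields $\cd_{\mathbb{Q}} \mathfrak{G} = \infty$ by Theorem \ref{regu}.

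Next I would invoke the theorem of Grigorchuk--Wilson from \cite{G-W}, which states that every finitely generated infinite subgroup of $\mathfrak{G}$ is commensurable with $\mathfrak{G}$ itself. Combined with the previous step, this says every finitely generated infinite subgroup of $\mathfrak{G}$ is commensurable with some direct power of $\mathfrak{G}$ (namely $\mathfrak{G}^{1}$), while finite subgroups are trivially accounted for. Hence $\mathfrak{G}$ is strongly multilateral in the sense introduced above Theorem \ref{mainn}.

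With both conditions verified, Theorem \ref{mainn} immediately gives that $\mathfrak{G}$ has jump rational cohomology of height $1$, and Corollary \ref{4.8} concludes that $\mathfrak{G} \notin \HH\FF$. The only nontrivial input is the Grigorchuk--Wilson classification of finitely generated subgroups of $\mathfrak{G}$, and potentially the mildly subtle point that an infinitely generated subgroup $H \leq \mathfrak{G}$ with $\cd_{\mathbb{Q}} H < \infty$ must actually satisfy $\cd_{\mathbb{Q}} H \leq 1$; but this was already handled inside the proof of Theorem \ref{mainn} using Lemma \ref{counta}, so no obstacle remains at the level of this theorem.
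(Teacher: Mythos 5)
Your argument is the same as the paper's: verify from de la Harpe VIII.14--15 that $\mathfrak{G}$ is infinite, finitely generated and commensurable with $\mathfrak{G}^{2}$, use Grigorchuk--Wilson to see every finitely generated infinite subgroup is commensurable with $\mathfrak{G}$, conclude $\mathfrak{G}$ is strongly multilateral, and apply Corollary \ref{4.8}. Your write-up just makes explicit the bookkeeping (including the harmless point about finite subgroups) that the paper leaves implicit.
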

\begin{rem} Theorem \ref{main} has two more consequences.\\ 
\emph{Conjecture} \cite{petro} For every group $G$ without $R$-torsion the following are equivalent.
\begin{itemize}
\item $G$ has jump cohomology of height $n$ over $R$.
\item $G$ has periodic cohomology over $R$ starting in dimension $n+1$.
\item $\cd_{R}G \leq n$.
\end{itemize}
Obviously from Theorem \ref{main} it follows that $\mathfrak{G}$ is a counterexample to the above conjecture.\

  Jo-Nucinkis in \cite{nujo} ask the following.\\  
  \emph{Question.} Let $G$ be a group such that every proper subgroup $H$ of $G$ of finite Bredon cohomological dimension satisfies $\cd_{\FF}H \leq n$ for some positive integer $n$. Is $\cd_{\FF}G <\infty$?\
  
Since a group $G$ has rational cohomological dimension equal to 1 if and only if it has Bredon cohomological dimension equal to 1 \cite{Dcoh},  Theorem \ref{main} shows that $\mathfrak{G}$  provides a negative answer to their question. \\
Given Theorem \ref{main}, it is easy to see that for any $n\geq 1$, the group $\mathfrak{G}\times \Z^{n-1}$ has infinite rational
cohomological dimension and jump rational cohomology of height $n$.
\end{rem}

The question of Jo-Nucinkis is a  ``proper actions version'' of an older question of Mislin-Talelli that asks if there exists a torsion-free group with jump integral cohomology but infinite cohomological dimension.
Note that every virtually torsion-free branch group $G$  contains a free abelian group of infinite countable rank. To see this take a ray and all edges just hanging off it. Than there is a non-trivial element of infinite order $a_{n}$ hanging off each edge since the rigid stabiliser of the $n$th-level $\Rstt_{G}(n)$ has finite  index in $G$ and $G$ is spherically transitive. These elements generate distinct  infinite cyclic subgroups of $G$ that obviously commute since they act on distinct subtrees and so they generate $\bigoplus_{\mathbb{N}} \Z$.
 This implies that $G$ has infinite rational cohomological dimension and does not have jump rational cohomology. Moreover, no torsion-free subgroup of finite index in $G$ can answer Mislin-Talelli question. 
A  more detailed study of the subgroup lattices of  virtually torsion-free branch groups would be very interesting. In fact it is unknown if there exists  a torsion-free group  $G\in{\scriptstyle \mathbf H}\FF\backslash {\scriptstyle \mathbf H}_{3}\FF$.  
\begin{question}Does every  finitely generated periodic  regular branch group have a finitely generated  strongly multilateral subgroup?
\end{question}

\begin{rem} Note that if $G$ is an ${\scriptstyle \mathbf H}\mathfrak F$-group, then $\Gcd G<\infty$ implies that $\cd_{\mathbb{Q}}G <\infty$. This can be shown in the following way. First we recall that  $\spli (RG)$ is the supremum of the projective  lengths of the injective $RG$-modules.  $\kappa(RG)$ is the supremum of the projective dimensions of the $RG$-modules that have finite $RF$-projective dimension for all $\FF$-subgroups of $G$. For any group  $G$, $\Gcd G <\infty$ if and only if $\spli(\mathbb{Z}G)<\infty$ by Remark 2.10 in \cite{cohodime}. Assume now that $G$ is an ${\scriptstyle \mathbf H}\mathfrak F$-group of finite Gorenstein  cohomological dimension. By Theorem C in \cite{cornick-98} $\spli(\mathbb{Q}G)=\kappa(\mathbb{Q}G)$. By \cite{geldgru} $\spli(\mathbb{Q}G)\leq\spli(\mathbb{Z}G)$; in particular  if $\spli(\mathbb{Q}G)<\infty$  then $\kappa(\mathbb{Q}G)< \infty$. Since $\mathbb{Q}$ is $\mathbb{Q}F$-projective for every $\FF$-subgroup $F$ of $G$ we have $\cd_{\mathbb{Q}}G <\infty$.

It is  known from recent work of Dembegioti and Talelli \cite{tade10} that the notions of a Gorenstein projective module and a cofibrant module coincide over ${\scriptstyle \mathbf H}\mathfrak F$-groups.  We suspect that the Gorenstein projective modules over an  ${\scriptstyle \mathbf H}\mathfrak F$-group $G$ are exactly direct summands of $\Z G$-modules obtained as extensions of permutation modules with $\FF$-stabilisers. If this holds then the inequality $\cd_{\mathbb{Q}}G\leq\Gcd G$ would be immediate.
 
It would be  interesting to compute the Gorenstein cohomological dimension of $\mathfrak{G}$. In fact,  $\mathfrak{G}$ could be a  counterexample to the conjecture of Bahlekeh, Dembegioti and Talelli.
\end{rem}
A group $G$ is \emph{just infinite} if it is infinite and every non-trivial normal subgroup of $G$ has finite index. It is well known that the group $\mathfrak{G}$ has this property \cite{branch}.

\begin{cor} $\mathfrak{G}$ does not contain a group of finite $\FF$-cohomological dimension for which the extension property fails to be subadditive.  \end{cor}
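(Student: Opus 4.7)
The plan is to show that every subgroup $L\leq\mathfrak{G}$ with $\FF\cd L<\infty$ already satisfies $\FF\cd L\leq 1$; subadditivity then holds at $L$ for trivial reasons, so no such $L$ can witness a failure of the extension property.

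First I would bound $\FF\cd L$. From the inequality $\cd_{\mathbb{Q}}L\leq\FF\cd L<\infty$ recalled in the introduction, together with the jump rational cohomology of height $1$ property of $\mathfrak{G}$ established in Theorem \ref{main}, I get $\cd_{\mathbb{Q}}L\leq 1$. If $L$ is finite this already gives $\FF\cd L=0$. Otherwise $L$ is infinite, and the trivial $\mathbb{Q}L$-module is not projective in this case (any splitting of the augmentation would require an $L$-fixed, finitely supported element of $\mathbb{Q}L$ of augmentation $1$, which cannot exist for infinite $L$), so $\cd_{\mathbb{Q}}L=1$. The equivalence $\cd_{\mathbb{Q}}G=1\Longleftrightarrow\cd_{\FF}G=1$ due to Dunwoody \cite{Dcoh}, cited in the preceding remark, then yields $\cd_{\FF}L=1$, and the sandwich $\cd_{\mathbb{Q}}L\leq\FF\cd L\leq\cd_{\FF}L$ forces $\FF\cd L=1$.

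Next I would verify subadditivity. Given any extension $N\hookrightarrow L\twoheadrightarrow Q$ with $L\leq\mathfrak{G}$ and $\FF\cd L<\infty$, consider the two cases above. If $\FF\cd L=0$, then $L$ is finite, so both $N$ and $Q$ are finite and $\FF\cd N+\FF\cd Q=0=\FF\cd L$. If $\FF\cd L=1$, then $L$ is infinite, so at least one of $N,Q$ is infinite; since $\FF\cd=0$ characterises finite groups, $\FF\cd N+\FF\cd Q\geq 1=\FF\cd L$. In either case subadditivity holds at $L$.

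The only substantive step is the bound $\FF\cd L\leq 1$; this rests squarely on Theorem \ref{main} together with the cited Dunwoody equivalence between rational and Bredon cohomological dimension at level one. Beyond invoking these two ingredients, the remaining combinatorics presents no obstacle.
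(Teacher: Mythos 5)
Your proof is correct and follows essentially the same route as the paper: both arguments use Theorem \ref{main} (the jump rational cohomology of height $1$) together with Dunwoody's theorem to force any subgroup $L\leq\mathfrak{G}$ with $\FF\cd L<\infty$ to satisfy $\FF\cd L\leq 1$, at which point subadditivity cannot fail. Your final bookkeeping step (finite groups are exactly those of $\FF$-cohomological dimension $0$, so the sum $\FF\cd N+\FF\cd Q$ is at least $1$ whenever $L$ is infinite) is in fact spelled out more cleanly than the paper's terse closing appeal to Theorem \ref{DD}.
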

\begin{proof} $\mathfrak{G}$ is just infinite and by Theorem \ref{DD} every normal subgroup $N$ of $\mathfrak{G}$  has infinite rational cohomological dimension,  so $\FF\cd N =\infty$.  Assume  $L$ is a subgroup of $\mathfrak{G}$ such that $H \hookrightarrow  L \tos Q$, with $\FF\cd H=n$, $|Q|<\infty$ and $n<\FF\cd L <\infty$. Then, by Theorem \ref{main} it follows that $\mathfrak{G}$ has jump rational cohomology of height $1$ and $L$ is not finitely generated. 
From Lemma \ref{counta} $\cd_{\mathbb{Q}}L\leq 1$. By Dunwoody's theorem \cite{Dcoh} $\cd_{\mathbb{Q}}L\leq 1$ if and only if $L$ acts on a tree $T$ with finite stabilisers. We can assume $|L|=\infty$ and the tree $T$  is a one dimensional model for $E_{\FF}L$, so $\cd_{\mathbb{Q}}L=\FF\cd L=\cd_{\FF}L=\gd_{\FF}L=1$ and the result follows from Theorem \ref{DD}.
\end{proof}

\bibliographystyle{alpha}
\bibliography{math}

\newcommand{\etalchar}[1]{$^{#1}$}
\begin{thebibliography}{ABJ{\etalchar{+}}09}

\bibitem[ABJ{\etalchar{+}}09]{team}
Goulnara Arzhantseva, Martin~R. Bridson, Tadeusz Januszkiewicz, Ian~J. Leary,
  Ashot Minasyan, and Jacek {\'S}wi{\c{a}}tkowski.
\newblock Infinite groups with fixed point properties.
\newblock {\em Geom. Topol.}, 13(3):1229--1263, 2009.

\bibitem[ABS09]{cohodime}
Javad Asadollahi, Abdolnaser Bahlekeh, and Shokrollah Salarian.
\newblock On the hierarchy of cohomological dimensions of groups.
\newblock {\em J. Pure Appl. Algebra}, 213(9):1795--1803, 2009.

\bibitem[BDT09]{abdeta}
Abdolnaser Bahlekeh, Fotini Dembegioti, and Olympia Talelli.
\newblock Gorenstein dimension and proper actions.
\newblock {\em Bull. Lond. Math. Soc.}, 41(5):859--871, 2009.

\bibitem[BG{\v{S}}03]{branch}
Laurent Bartholdi, Rostislav~I. Grigorchuk, and Zoran {\v{S}}uni{\'k}.
\newblock Branch groups.
\newblock In {\em Handbook of algebra, {V}ol. 3}, pages 989--1112.
  North-Holland, Amsterdam, 2003.

\bibitem[Bie81]{bieri-81}
Robert Bieri.
\newblock {\em Homological dimension of discrete groups}.
\newblock Queen Mary College Mathematical Notes. Queen Mary College Department
  of Pure Mathematics, London, second edition, 1981.

\bibitem[BLN01]{bln}
Noel Brady, Ian~J. Leary, and Brita E.~A. Nucinkis.
\newblock On algebraic and geometric dimensions for groups with torsion.
\newblock {\em J. London Math. Soc. (2)}, 64(2):489--500, 2001.

\bibitem[Bou99]{bouc}
Serge Bouc.
\newblock Le complexe de cha\^\i nes d'un {$G$}-complexe simplicial acyclique.
\newblock {\em J. Algebra}, 220(2):415--436, 1999.

\bibitem[Bro82]{brown-82}
Kenneth~S. Brown.
\newblock {\em Cohomology of groups}, volume~87 of {\em Graduate Texts in
  Mathematics}.
\newblock Springer-Verlag, New York, 1982.

\bibitem[Cho76]{chouinard}
Leo~G. Chouinard.
\newblock Projectivity and relative projectivity over group rings.
\newblock {\em J. Pure Appl. Algebra}, 7(3):287--302, 1976.

\bibitem[CK98]{cornick-98}
Jonathan Cornick and Peter~H. Kropholler.
\newblock Homological finiteness conditions for modules over group algebras.
\newblock {\em J. London Math. Soc. (2)}, 58(1):49--62, 1998.

\bibitem[DD89]{DD}
Warren Dicks and M.~J. Dunwoody.
\newblock {\em Groups acting on graphs}, volume~17 of {\em Cambridge Studies in
  Advanced Mathematics}.
\newblock Cambridge University Press, Cambridge, 1989.

\bibitem[dlH00]{harpe}
Pierre de~la Harpe.
\newblock {\em Topics in geometric group theory}.
\newblock Chicago Lectures in Mathematics. University of Chicago Press,
  Chicago, IL, 2000.

\bibitem[DT10]{tade10}
Fotini Dembegioti and Olympia Talelli.
\newblock A note on complete resolutions.
\newblock {\em Proc. Amer. Math. Soc.}, 138(11):3815--3820, 2010.

\bibitem[Dun79]{Dcoh}
M.~J. Dunwoody.
\newblock Accessibility and groups of cohomological dimension one.
\newblock {\em Proc. London Math. Soc. (3)}, 38(2):193--215, 1979.

\bibitem[FN05]{flores-05}
Ram{\'o}n~J. Flores and Brita E.~A. Nucinkis.
\newblock On {B}redon homology of elementary amenable groups.
\newblock {\em Proc. Amer. Math. Soc.}, 135(1):5--11 (electronic), 2005.

\bibitem[GG87]{geldgru}
T.~V. Gedrich and K.~W. Gruenberg.
\newblock Complete cohomological functors on groups.
\newblock {\em Topology Appl.}, 25(2):203--223, 1987.
\newblock Singapore topology conference (Singapore, 1985).

\bibitem[Gri80]{grigo80}
R.~I. Grigor{\v{c}}uk.
\newblock On {B}urnside's problem on periodic groups.
\newblock {\em Funktsional. Anal. i Prilozhen.}, 14(1):53--54, 1980.

\bibitem[Gui08]{guidosbook}
{\em Guido's book of conjectures}, volume~40 of {\em Monographies de
  L'Enseignement Math\'ematique [Monographs of L'Enseignement Math\'ematique]}.
\newblock L'Enseignement Math\'ematique, Geneva, 2008.
\newblock A gift to Guido Mislin on the occasion of his retirement from ETHZ
  June 2006, Collected by Indira Chatterji.

\bibitem[GW03]{G-W}
R.~I. Grigorchuk and J.~S. Wilson.
\newblock A structural property concerning abstract commensurability of
  subgroups.
\newblock {\em J. London Math. Soc. (2)}, 68(3):671--682, 2003.

\bibitem[Hil91]{hillman-91}
Jonathan~A. Hillman.
\newblock Elementary amenable groups and {$4$}-manifolds with {E}uler
  characteristic {$0$}.
\newblock {\em J. Austral. Math. Soc. Ser. A}, 50(1):160--170, 1991.

\bibitem[JN08]{nujo}
Jang~Hyun Jo and Brita E.~A. Nucinkis.
\newblock Periodic cohomology and subgroups with bounded {B}redon cohomological
  dimension.
\newblock {\em Math. Proc. Cambridge Philos. Soc.}, 144(2):329--336, 2008.

\bibitem[Jon74]{tyrer}
J.~M.~Tyrer Jones.
\newblock Direct products and the {H}opf property.
\newblock {\em J. Austral. Math. Soc.}, 17:174--196, 1974.
\newblock Collection of articles dedicated to the memory of Hanna Neumann, VI.

\bibitem[Kro93]{MR1246274}
Peter~H. Kropholler.
\newblock On groups of type {$({\rm FP})\sb \infty$}.
\newblock {\em J. Pure Appl. Algebra}, 90(1):55--67, 1993.

\bibitem[KW11]{kropwall}
Peter~H. Kropholler and C.T.C. Wall.
\newblock Finite group actions and contractible cell complexes.
\newblock {\em Publicacions Matem{\`a}tiques}, 55(1):3--18, 2011.

\bibitem[LN03]{leary-03}
Ian~J. Leary and Brita E.~A. Nucinkis.
\newblock Some groups of type {$VF$}.
\newblock {\em Invent. Math.}, 151(1):135--165, 2003.

\bibitem[LN10]{nuclepre}
Ian~J. Leary and Brita E.~A. Nucinkis.
\newblock On groups acting on contractible spaces with stabilizers of
  prime-power order.
\newblock {\em J. Group Theory}, 13(5):769--778, 2010.

\bibitem[L{\"u}c89]{luck-89}
Wolfgang L{\"u}ck.
\newblock {\em Transformation groups and algebraic {$K$}-theory}, volume 1408
  of {\em Lecture Notes in Mathematics}.
\newblock Springer-Verlag, Berlin, 1989.
\newblock Mathematica Gottingensis.

\bibitem[L{\"u}c00]{lueck-00}
Wolfgang L{\"u}ck.
\newblock The type of the classifying space for a family of subgroups.
\newblock {\em J. Pure Appl. Algebra}, 149(2):177--203, 2000.

\bibitem[Mil56]{milnor}
John Milnor.
\newblock Construction of universal bundles.
\newblock {\em Ann. of Math. (2)}, 63:272--284, 1956.

\bibitem[Mis01]{MR1851258}
Guido Mislin.
\newblock On the classifying space for proper actions.
\newblock In {\em Cohomological methods in homotopy theory ({B}ellaterra,
  1998)}, volume 196 of {\em Progr. Math.}, pages 263--269. Birkh\"auser,
  Basel, 2001.

\bibitem[ML95]{mac-lane-95}
Saunders Mac~Lane.
\newblock {\em Homology}.
\newblock Classics in Mathematics. Springer-Verlag, Berlin, 1995.
\newblock Reprint of the 1975 edition.

\bibitem[MP02]{martinez-perez-02}
Conchita Mart{\'{\i}}nez-P{\'e}rez.
\newblock A spectral sequence in {B}redon (co)homology.
\newblock {\em J. Pure Appl. Algebra}, 176(2-3):161--173, 2002.

\bibitem[MP07]{MR2364823}
Conchita Mart{\'{\i}}nez-P{\'e}rez.
\newblock A bound for the {B}redon cohomological dimension.
\newblock {\em J. Group Theory}, 10(6):731--747, 2007.

\bibitem[MPN06]{Martnu}
Conchita Martinez-P{\'e}rez and Brita E.~A. Nucinkis.
\newblock Cohomological dimension of {M}ackey functors for infinite groups.
\newblock {\em J. London Math. Soc. (2)}, 74(2):379--396, 2006.

\bibitem[Nuc99]{nucinkis-99}
Brita E.~A. Nucinkis.
\newblock Cohomology relative to a {$G$}-set and finiteness conditions.
\newblock {\em Topology Appl.}, 92(2):153--171, 1999.

\bibitem[Nuc00]{nucinkis-00}
Brita E.~A. Nucinkis.
\newblock Is there an easy algebraic characterisation of universal proper
  {$G$}-spaces?
\newblock {\em Manuscripta Math.}, 102(3):335--345, 2000.

\bibitem[Nuc04]{nucinkis-04}
Brita E.~A. Nucinkis.
\newblock On dimensions in {B}redon homology.
\newblock {\em Homology Homotopy Appl.}, 6(1):33--47 (electronic), 2004.

\bibitem[Pet07]{petro}
Nansen Petrosyan.
\newblock Jumps in cohomology and free group actions.
\newblock {\em J. Pure Appl. Algebra}, 210(3):695--703, 2007.

\bibitem[Roz98]{lfgri}
A.~V. Rozhkov.
\newblock Maximal locally finite subgroups in the {G}rigorchuk group.
\newblock {\em Mat. Zametki}, 63(4):617--624, 1998.

\bibitem[Sap11]{sapir}
Mark Sapir.
\newblock Aspherical groups and manifolds with extreme properties.
\newblock {\em Preprint}, 2011.

\bibitem[Sau06]{sauer}
R.~Sauer.
\newblock Homological invariants and quasi-isometry.
\newblock {\em Geom. Funct. Anal.}, 16(2):476--515, 2006.

\bibitem[Sch78]{MR0473038}
Hans~Rudolf Schneebeli.
\newblock On virtual properties and group extensions.
\newblock {\em Math. Z.}, 159(2):159--167, 1978.

\bibitem[Seg68]{segal}
Graeme Segal.
\newblock Classifying spaces and spectral sequences.
\newblock {\em Inst. Hautes \'Etudes Sci. Publ. Math.}, (34):105--112, 1968.

\bibitem[Smi07]{smith}
Justin Smith.
\newblock The asymptotic dimension of the first {G}rigorchuk group is infinity.
\newblock {\em Rev. Mat. Complut.}, 20(1):119--121, 2007.

\end{thebibliography}
\end{document}